\documentclass[reqno,11pt,centertags]{amsart}
\usepackage{geometry}
\geometry{left=3.5cm, right=3.5cm, top=4.5cm, bottom=4.5cm}
\usepackage{amsmath,amsthm,amscd,amssymb,latexsym,upref,stmaryrd}
\usepackage{graphicx}
% Here you can turn off all labels
%\usepackage{showkeys}
%\usepackage[nomsgs,ignoreunlbld]{refcheck}
\usepackage{hyperref}
\usepackage{enumitem}
\usepackage{xcolor}
%%%%%%%%%
%\newcommand{\arxiv}[1]{\href{http://arxiv.org/#1}{arXiv:#1}}
\newcommand*{\mailto}[1]{\href{mailto:#1}{\nolinkurl{#1}}}
\usepackage[numbers,sort&compress]{natbib}

%%%%%%%%%%%%% fonts/sets %%%%%%%%%%%%%%%%%%%%%%%

%%%%%%%%%%%%%%%%%%  abbreviations %%%%%%%%%%%%%%%%%%%

%\newcommand{\dim}{\text{\rm{dim}}}

\newcommand{\beq}{\begin{equation}}
	\newcommand{\eeq}{\end{equation}}
\newcommand{\ba}{\begin{align}}
	\newcommand{\ea}{\end{align}}

% use \hat in subscripts
% and upperlimits of int.

%%%%%%%%%%%%%%%%%%%%%% renewed commands %%%%%%%%%%%%%

%%%%%%%%%%%%%%%%%%%%%% operators %%%%%%%%%%%%%%%%%%%%

%\DeclareMathOperator{\ln}{ln}

 \allowdisplaybreaks
\numberwithin{equation}{section}
%%%%%%%%%%%%%%%%%%%% end of  definitions

%%%%%%%%%%%%%%%%%%%%%%%%%%%%%%%%%%%%%%%

\newtheorem{theorem}{Theorem}[section]
\newtheorem{lemma}[theorem]{Lemma}

\theoremstyle{definition}

\begin{document}
	
	%\Volume{}
	%\Year{}
	%\DOIsuffix{mana.200310}
	%\pagespan{1}{}
	%\Receiveddate{}
	%\Reviseddate{}
	%\Accepteddate{}
	%\Dateposted{}
	
	\title[Incomplete Inverse problem for Dirac operators]
	{Incomplete inverse problem for Dirac operator with constant delay}
	
	\author[F.~Wang]{Feng Wang}
	\address{School of Mathematics and Statistics, Nanjing University of
		Science and Technology, Nanjing, 210094, Jiangsu, China}
	\email{\mailto{wangfengmath@njust.edu.cn}}
	
	\author[C.~F.~Yang]{CHUAN-FU Yang}
	\address{Department of Mathematics, School of Mathematics and Statistics, Nanjing University of
		Science and Technology, Nanjing, 210094, Jiangsu, People's
		Republic of China}
	\email{\mailto{chuanfuyang@njust.edu.cn}}

	\subjclass[2000]{34A55; 34K29}
	\keywords{Dirac-type operator, Constant delay, Inverse spectral problem.}
	\date{\today}
	
	\begin{abstract}
{In this work, we consider Dirac-type operators with a constant delay less than two-fifths of the interval and not less than one-third of the interval. For our considered Dirac-type operators, an incomplete inverse spectral problem is studied. Specifically, when two complex potentials are known a priori on a certain subinterval, reconstruction of the two potentials on the entire interval is studied from complete spectra of two boundary value problems with one common boundary condition. The uniqueness of the solution of the inverse problem is proved. A constructive method is developed for the solution of the inverse problem.}
	\end{abstract}
	
	\maketitle
	
\section{introduction and main results}
	In the past decade, there appeared a significant interest in inverse problems for Sturm-Liouville-type operators with constant delay:
\begin{equation}\label{0.1}
  -y''(x)+q(x)y(x-a)=\lambda y(x),\quad x\in(0,\pi),
\end{equation}
under two-point boundary conditions (see [2-3, 7-12, 14, 19, 23-25, 29-30] and references therein),  which are often adequate for modelling various real-world processes frequently possessing a nonlocal nature. Here $q(x)$ is a complex-valued function in $L_{2}(a,\pi)$ vanishing on $(0,a)$. It is well known that the potential $q(x)$ is uniquely determined by specifying the spectra of two boundary value problems for equation (\ref{0.1}) with a common boundary condition at zero as soon as $a\in[\frac{2\pi}{5},\pi)$. The recent series of papers [9-11] establishes, however, that it is never possible for $a\in(0,\frac{2\pi}{5})$. For more details, see Introduction in \cite{B-M-S}.

To the best of our knowledge, the first attempt of defining Dirac-type operator with constant delay was made in \cite{Buter2}. They consider the following Dirac-type system with a delay constant $a\!\in\!(0,\pi)$:

\begin{align}\label{1.1}
  By'(x)+Q(x)y(x-a)=\lambda y(x),\quad x\in (0,\pi),
\end{align}
where
\begin{align}
  B=\begin{bmatrix}
    0 & 1   \\  -1 & 0
  \end{bmatrix},\;\;
  Q(x)=\begin{bmatrix}
    q(x) & p(x)   \\  p(x) & -q(x)
  \end{bmatrix},\;\;
   y(x)=\begin{bmatrix}
    y_{1}(x) \\ y_{2}(x)
  \end{bmatrix}, \nonumber
\end{align}
while $q(x)$ and $p(x)$ are complex-valued functions belong to $L_{2}(0,\pi)$, and $Q(x)=0$ on $(0,a)$. For any fixed $j\in\{1,2\}$, denote by $L_{j}(Q)$ the boundary value problem for equation (\ref{1.1}) with the boundary conditions
\begin{align}\label{1.2}
  y_{1}(0)=0,\quad   y_{j}(\pi)=0.
\end{align}

In \cite{Buter2}, authors restrict themselves to the case $a\in[\frac{\pi}{2}, \pi)$, when the dependence of the characteristic functions of the problems $L_{j}(Q)$ on $Q(x)$ is linear. For the considered case, however, they give answers to the full range of questions usually raised in the inverse spectral theory. Specifically, reconstruction of two complex potentials $q$ and $p$ is studied from either complete spectra or subspectra of two boundary value problems $L_{1}(Q)$ and $L_{2}(Q)$. They give conditions on the subspectra that are necessary and sufficient for the unique determination of the potentials. Moreover, necessary and sufficient conditions for the solvability of both inverse problems are obtained. For the inverse problem of recovering from the complete spectra, they establish also uniform stability in each ball of a finite radius.

In \cite{W-Y3}, we restrict ourselves to the case $a\in[\frac{2\pi}{5},\frac{\pi}{2})$, when the dependence of the characteristic functions of the problems $L_{j}(Q)$ on $Q(x)$ is nonlinear. For our considered case, we study the inverse problem of restoring two complex potentials $q$ and $p$ from complete spectra of two boundary value problems $L_{1}(Q)$ and $L_{2}(Q)$. We also provide full answers for our considered inverse problem: uniqueness, solvability and uniform stability.

In \cite{D-V0}, by constructing counterexamples, authors prove that two spectra of the problems $L_{1}(Q)$ and $L_{2}(Q)$ cannot uniquely determine the potentials $q$ and $p$ in the case $a\in[\frac{\pi}{3},\frac{2\pi}{5})$. Therefore, in this case, a natural question is whether it is possible to add some information of the potential functions so that two spectra of the problems $L_{1}(Q)$ and $L_{2}(Q)$ can also uniquely determine the potentials $q$ and $p$. This  article aims to address this issue.

In this paper, we restrict ourselves to the case $a\in[\frac{\pi}{3},\frac{2\pi}{5})$. For this case, an incomplete inverse spectral problem is studied. Specifically, when the two complex potentials $q$ and $p$ are known a priori on a certain subinterval, reconstruction of the two potentials on the entire interval is studied from two spectra of the problems $L_{1}(Q)$ and $L_{2}(Q)$ (for details, see Inverse Problem  1 below). A uniqueness theorem is given for the incomplete inverse problem (See Theorem \ref{th2} below). As well as, a constructive method is developed for the solution of the incomplete inverse problem (for details, see Algorithm 1 in Section 3). The motivation for studying this incomplete inverse problem comes from the paper \cite{D-V}, in which a similar inverse problem is studied for the Sturm-Liouville-type operators with constant delay.

Additionally, in the classical case $a=0$, inverse problems for (\ref{1.1}) were studied in [1, 15-18, 20-22] and other works. Meanwhile, it is worth mentioning that the first attempt of defining operator (\ref{0.1}) on a star-type graph was made in \cite{W-Y1, W-Y2}, which could be classified as locally nonlocal because the delay on each edge does not affect the other edges. Afterwards, Buterin suggests another concept of operator  with delay on graphs that can be characterized as globally nonlocal, when the delay extends through vertices of the graph (for details, please refer to \cite{Buter}).

To begin with, let us give the following theorem about the asymptotic relations of eigenvalues for the boundary value problems $L_{j}(Q)$, $j=1,2$, which has been proved in \cite{W-Y3}.

\begin{theorem}\label{th1}
For $j\in\{1,2\}$, the boundary value problem $L_{j}(Q)$ has infinitely many eigenvalues $\lambda_{n,j}$, $n\in\mathbb{Z}$, of the form
\begin{align}\label{1.3}
\lambda_{n,j}=n-\frac{j-1}{2}+\kappa_{n,j},\quad \{\kappa_{n,j}\}_{n\in\mathbb{Z}}\in l_{2}.
\end{align}
\end{theorem}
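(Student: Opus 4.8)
The plan is to write the characteristic functions $\Delta_j(\lambda)$ of the problems $L_j(Q)$ in closed form by the method of steps and then locate their zeros. Let $\varphi(x,\lambda)=(\varphi_1(x,\lambda),\varphi_2(x,\lambda))^{\top}$ be the solution of \eqref{1.1} with $\varphi_1(0,\lambda)=0$ and $\varphi_2(0,\lambda)=1$; then $\Delta_j(\lambda):=\varphi_j(\pi,\lambda)$, and the eigenvalues of $L_j(Q)$ are exactly the zeros of $\Delta_j$. Since $Q\equiv0$ on $(0,a)$, on that subinterval $\varphi$ coincides with the free solution $\varphi^{0}(x,\lambda)=(-\sin\lambda x,\cos\lambda x)^{\top}$, and the variation of parameters formula with the fundamental matrix $M(x,\lambda)=\exp(\lambda x B^{-1})$ of $By'=\lambda y$ gives
\[
\varphi(x,\lambda)=\varphi^{0}(x,\lambda)-\int_{a}^{x}M(x-t,\lambda)B^{-1}Q(t)\,\varphi(t-a,\lambda)\,dt .
\]
Iterating this identity, the standing assumption $a\ge\frac{\pi}{3}$ forces the series to terminate: the would-be third iterate is an integral over $\{t>3a\}\cap(0,\pi)=\emptyset$, so $\varphi=\varphi^{0}+\varphi^{(1)}+\varphi^{(2)}$ with $\varphi^{(1)}$ linear and $\varphi^{(2)}$ quadratic in $Q$ (the quadratic term being genuinely present since $a<\frac{\pi}{2}$).

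Next I would evaluate $\Delta_j(\lambda)=\Delta_j^{0}(\lambda)+\Delta_j^{(1)}(\lambda)+\Delta_j^{(2)}(\lambda)$ at $x=\pi$. The free parts $\Delta_1^{0}(\lambda)=-\sin\lambda\pi$ and $\Delta_2^{0}(\lambda)=\cos\lambda\pi$ have zeros exactly at $n$ and at $n-\tfrac12$, $n\in\mathbb{Z}$, which are the leading terms in \eqref{1.3}. Expanding the products of sines and cosines coming from $M$, $Q$, and $\varphi^{0}$, one finds that $\Delta_j^{(1)}(\lambda)$ is a finite linear combination of integrals $\int_{a}^{\pi}g(t)e^{\pm i\lambda(\pi+a-2t)}\,dt$ with $g\in\{q,p\}$, whose phase $\pi+a-2t$ is non-constant and sweeps an interval of length $2(\pi-a)<2\pi$. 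The same trigonometric identities collapse the cross terms in $\varphi^{(2)}(\pi,\lambda)$, so that $\Delta_j^{(2)}(\lambda)$ is a finite combination of double integrals with non-constant phases of the form $\pm\lambda(2(t-s)-\pi)$ sweeping an interval of length $2(\pi-2a)<2\pi$, the amplitudes being bilinear in $q,p$ and hence in $L_2$ (Minkowski's integral inequality). Thus $\Delta_j^{(1)}$ and $\Delta_j^{(2)}$ are of exponential type $\pi-a$ and $\pi-2a$ respectively, strictly below $\pi$, and by the Riemann--Lebesgue lemma they tend to $0$ along the real axis, uniformly on horizontal strips of bounded width.

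The asymptotics \eqref{1.3} then follow by the classical perturbation argument. Since $\Delta_j^{0}$ is of sine type and $|\Delta_j^{(1)}+\Delta_j^{(2)}|\to0$ on the circles $|\lambda-(n-\tfrac{j-1}{2})|=\delta$ as $|n|\to\infty$, Rouch\'e's theorem gives, for every small $\delta>0$ and all $|n|\ge N(\delta)$, exactly one zero $\lambda_{n,j}$ inside such a circle, while a count on the circles $|\lambda|=N+\tfrac12$ shows there are no others; hence $\lambda_{n,j}=n-\frac{j-1}{2}+\kappa_{n,j}$ with $\kappa_{n,j}\to0$. Substituting into $\Delta_j(\lambda_{n,j})=0$ yields $|\sin(\pi\kappa_{n,j})|=\bigl|\Delta_j^{(1)}(\lambda_{n,j})+\Delta_j^{(2)}(\lambda_{n,j})\bigr|$, and since $|\kappa_{n,j}|\le C|\sin(\pi\kappa_{n,j})|$ for $|n|$ large, it remains to show $\bigl(\Delta_j^{(1)}(\lambda_{n,j})\bigr)_{n}\in l_2$ and $\bigl(\Delta_j^{(2)}(\lambda_{n,j})\bigr)_{n}\in l_2$. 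This is the step I expect to be the main obstacle: it reduces to showing that integrals $\int h(w)e^{i\lambda_{n,j}w}\,dw$ over an interval of length $<2\pi$ with $h\in L_2$ form an $l_2$ sequence in $n$. Knowing already that $\{\lambda_{n,j}\}$ is an $o(1)$-perturbation of $\{n-\frac{j-1}{2}\}$, the family $\{e^{i\lambda_{n,j}w}\}$ is a Bessel system on any such interval --- a bounded perturbation of an orthogonal exponential basis of $L_2(-\pi,\pi)$, by Kadec's $\tfrac14$-theorem --- which gives the $l_2$ bound; the one point needing care is that the modulations $e^{\pm i\kappa_{n,j}w}$ and $e^{-(\Im\lambda_{n,j})w}$ arising when $\lambda_{n,j}$ is not an integer be absorbed into $h$ without spoiling the Bessel estimate, which is legitimate because $\{\kappa_{n,j}\}$ is bounded. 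Combining these estimates gives $\{\kappa_{n,j}\}_{n\in\mathbb{Z}}\in l_2$, i.e.\ \eqref{1.3}.
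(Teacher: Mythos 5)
Your proposal is correct and follows essentially the same route as the source: the paper itself only cites \cite{W-Y3} for this theorem, but its Section 2 derives exactly your terminating two-step expansion, arriving at $\Delta_{1}(\lambda)=-\sin\lambda\pi+\int_{a-\pi}^{\pi-a}u_{1}(x)e^{i\lambda x}dx$ and $\Delta_{2}(\lambda)=\cos\lambda\pi+\int_{a-\pi}^{\pi-a}u_{2}(x)e^{i\lambda x}dx$ with $u_{j}\in L_{2}$ supported on an interval of length $2(\pi-a)<2\pi$, after which the Rouch\'e localization and the Bessel-sequence bound for $\{e^{i\lambda_{n,j}x}\}$ (a bounded perturbation of $\{e^{i(n-(j-1)/2)x}\}$) give $\{\kappa_{n,j}\}\in l_{2}$ exactly as you describe. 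The only cosmetic quibble is that Kadec's $\tfrac14$-theorem is not the natural citation for the Bessel property under a merely bounded perturbation; the elementary expansion $e^{i\lambda_{n,j}x}=\sum_{k\ge0}\frac{(i\kappa_{n,j}x)^{k}}{k!}e^{i(n-(j-1)/2)x}$ suffices.
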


Throughout the article, we use the symbol $f|_{\mathcal{S}}$ for denoting the restriction of the function $f$ to the set $\mathcal{S}$. Assuming that the delay constant $a\in[\frac{\pi}{3},\frac{2\pi}{5})$ is known a priori, we consider the following inverse problem.

\textbf{Inverse Problem 1.}  Given $q|_{(\frac{3a}{2},\frac{\pi}{2}+\frac{a}{4})}$, $p|_{(\frac{3a}{2},\frac{\pi}{2}+\frac{a}{4})}$
and the two spectra $\{\lambda_{n,j}\}_{n\in\mathbb{Z}}$, $j=1,2$, find the potential functions $q(x)$ and $p(x)$ for $x\in[a,\pi]$.

Next, we present the uniqueness theorem for Inverse Problem 1, which is also the main result of this paper. To this end, for $j=1,2$, along with the problem $L_{j}(Q)$, we will consider other problem $L_{j}(\widetilde{Q})$ of the same form but with a different potential matrix
\begin{equation}
  \widetilde{Q}(x)=\begin{bmatrix}
    \widetilde{q}(x) & \widetilde{p}(x)   \\  \widetilde{p}(x) & -\widetilde{q}(x)
  \end{bmatrix}. \nonumber
\end{equation}
We agree that if a certain symbol $\alpha$ denotes an object related to the problem $L_{j}(Q)$, then  this symbol with tilde $\widetilde{\alpha}$ will denote the analogous object related to the problem $L_{j}(\widetilde{Q})$ .

\begin{theorem}\label{th2}
 If $q(x)\!=\!\widetilde{q}(x)$, $p(x)\!=\!\widetilde{p}(x)$ a.e. on $(\frac{3a}{2},\frac{\pi}{2}\!+\!\frac{a}{4})$, and
 $\lambda_{n,1}\!=\!\widetilde{\lambda}_{n,1}$, $\lambda_{n,2}\!=\!\widetilde{\lambda}_{n,2}$, $n\in\mathbb{Z}$, then $q(x)\!=\!\widetilde{q}(x)$ and $p(x)\!=\!\widetilde{p}(x)$ a.e. on $[a,\pi]$. In other words, if the potentials $q(x)$ and $p(x)$  are known a priori on the subinterval $(\frac{3a}{2},\frac{\pi}{2}\!+\!\frac{a}{4})$, then the specification of two spectra $\{\lambda_{n,1}\}_{n\in\mathbb{Z}}$ and $\{\lambda_{n,2}\}_{n\in\mathbb{Z}}$ uniquely determines the potentials $q$ and $p$ on the interval $[a,\pi]$.
\end{theorem}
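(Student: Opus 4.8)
The plan is a three-stage argument — recover the characteristic functions from the two spectra, rewrite $\Delta_j=\widetilde\Delta_j$ as homogeneous integral identities for $\widehat Q:=Q-\widetilde Q$, and then peel off $\widehat Q\equiv 0$ on $[a,\pi]$ band by band, seeded by the a priori data on $(\tfrac{3a}{2},\tfrac{\pi}{2}+\tfrac a4)$. First I would let $\varphi(\cdot,\lambda)=(\varphi_1,\varphi_2)^{\top}$ be the solution of (\ref{1.1}) with $\varphi_1(0,\lambda)=0$, $\varphi_2(0,\lambda)=1$, put $\Delta_1(\lambda)=\varphi_1(\pi,\lambda)$ and $\Delta_2(\lambda)=\varphi_2(\pi,\lambda)$ (so that $\spec L_j(Q)=\{\lambda:\Delta_j(\lambda)=0\}$), and solve (\ref{1.1}) by the method of steps; this is legitimate because $3a\ge\pi$, so on $(0,\pi)$ only the three regimes $(0,a)$, $(a,2a)$, $(2a,\pi)$ occur. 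The computation gives
\begin{align*}
\Delta_1(\lambda)&=-\sin\lambda\pi+\int_a^\pi\big[p(t)\sin\lambda(\pi+a-2t)-q(t)\cos\lambda(\pi+a-2t)\big]\,dt+R_1(\lambda),\\
\Delta_2(\lambda)&=\cos\lambda\pi-\int_a^\pi\big[p(t)\cos\lambda(\pi+a-2t)+q(t)\sin\lambda(\pi+a-2t)\big]\,dt+R_2(\lambda),
\end{align*}
where $R_j$ is a double integral, bilinear in $Q$, over the region $D=\{(s,t):a\le s\le t-a,\ 2a\le t\le\pi\}$, and — the decisive structural fact — every oscillatory phase occurring in $R_j$ has the form $\lambda\omega$ with $|\omega|\le\pi-2a$, whereas $|\pi+a-2t|\le\pi-a$ for $t\in[a,\pi]$ in the linear term. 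Consequently $\Delta_j-\Delta_j^0$ is entire of exponential type at most $\pi-a<\pi$ while $\Delta_j^0\in\{-\sin\lambda\pi,\cos\lambda\pi\}$ has type $\pi$; since $\Delta_j$ is then entire of order $1$ and type $\pi$, with zero set exactly $\{\lambda_{n,j}\}_{n\in\mathbb{Z}}$ of density $1$ by (\ref{1.3}) and prescribed leading term $\Delta_j^0$, Hadamard's factorization reconstructs $\Delta_j$ from those zeros together with $\Delta_j^0$. As $\widetilde\Delta_j$ has the same zeros and the same leading term, $\Delta_1\equiv\widetilde\Delta_1$ and $\Delta_2\equiv\widetilde\Delta_2$.

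Writing $\widehat q=q-\widetilde q$, $\widehat p=p-\widetilde p$ — which vanish on $J:=(\tfrac{3a}{2},\tfrac{\pi}{2}+\tfrac a4)$ by hypothesis — and subtracting, the identities $\Delta_j-\widetilde\Delta_j\equiv 0$ become a homogeneous \emph{linear} system in $(\widehat q,\widehat p)$: the linear terms are as above with $q,p$ replaced by $\widehat q,\widehat p$, and each quadratic remainder contributes $R_j(Q)-R_j(\widetilde Q)=R_j[\widehat Q,Q]+R_j[\widetilde Q,\widehat Q]$, still a double integral over $D$ carrying only phases $|\omega|\le\pi-2a$, in which one slot holds the known $Q$ or $\widetilde Q$ and the other holds $\widehat Q$. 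I would then read these identities off frequency band by frequency band. Regarding $\Delta_j-\widetilde\Delta_j$ as a Paley--Wiener transform, its spectral density at $\omega=\pi+a-2t$ coming from the linear part is a fixed \emph{non-degenerate} combination of $\widehat q(t),\widehat q(\pi+a-t),\widehat p(t),\widehat p(\pi+a-t)$ — non-degenerate because $\Delta_1$ pairs $q$ with cosines and $p$ with sines whereas $\Delta_2$ does the opposite — while the contribution of the bilinear remainder is supported in $\{|\omega|\le\pi-2a\}$. On the top band $|\omega|\in(\pi-2a,\pi-a]$ the bilinear part is absent, so there the two (now purely linear) identities hold; evaluating them at $\omega$ and at $-\omega$ gives four linear equations for the four quantities above, and the non-degeneracy forces $\widehat q=\widehat p=0$ on $\{t:|\pi+a-2t|\in(\pi-2a,\pi-a]\}=[a,\tfrac{3a}{2})\cup(\pi-\tfrac a2,\pi]$. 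Combined with the vanishing on $J$, this leaves $\widehat Q$ possibly nonzero only on the middle block $[\tfrac{\pi}{2}+\tfrac a4,\pi-\tfrac a2]$; now that the $\widehat Q$-slot in $R_j[\widehat Q,Q]+R_j[\widetilde Q,\widehat Q]$ is confined there, a recount of the phases it can produce shows that the next lower band of $\omega$ is again bilinear-free while its linear density still registers the $\widehat q,\widehat p$ values on the block, so the same manoeuvre trims a further subinterval off the block. Iterating, the unexhausted block shrinks and, after finitely many steps, $\widehat q=\widehat p=0$ on all of $[a,\pi]$, which is the assertion.

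The routine parts are the factorization above and the derivation of the two representations by the method of steps. The crux is the induction in the last stage: one must track precisely which values of $\widehat Q$ enter $R_j[\widehat Q,Q]+R_j[\widetilde Q,\widehat Q]$ at each step, over which sub-rectangle of $D$ and with which phases, and verify that the next band of $\omega$ is genuinely free of quadratic contamination from the not-yet-recovered part of $\widehat Q$ — and that this process closes for \emph{every} $a\in[\tfrac{\pi}{3},\tfrac{2\pi}{5})$. It is exactly here that the endpoints $\tfrac{3a}{2}$ and $\tfrac{\pi}{2}+\tfrac a4$ of the a priori interval, together with the strict bound $a<\tfrac{2\pi}{5}$, are indispensable: the closure reduces to a chain of elementary interval inequalities that must all come out right. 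A smaller technical point is establishing the non-degeneracy used to separate $\widehat q$ from $\widehat p$ in each clean band — this is why both problems $L_1(Q)$ and $L_2(Q)$, sharing the boundary condition at $0$, are needed.
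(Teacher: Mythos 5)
Your overall architecture --- recover $\Delta_1,\Delta_2$ from the spectra by Hadamard factorization, subtract to obtain a homogeneous linear\,$+$\,bilinear identity for $\widehat Q=Q-\widetilde Q$, and peel the support of $\widehat Q$ away starting from the outermost frequencies --- is the same as the paper's, and your first stage is correct: the bilinear part lives at phases $|\omega|\le\pi-2a$, so on $|\omega|\in(\pi-2a,\pi-a]$ the identity is purely linear and forces $\widehat q=\widehat p=0$ on $[a,\frac{3a}{2})\cup(\pi-\frac{a}{2},\pi]$, which is the paper's Lemma \ref{lem3}.

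The gap is in the engine of your induction. Once $\widehat Q$ is confined to the middle block $[\frac{\pi}{2}+\frac{a}{4},\pi-\frac{a}{2}]$, it is \emph{not} true that ``the next lower band of $\omega$ is again bilinear-free'': the bilinear difference carries the phase $\omega=\pi-2(t-s)$ over $a\le s\le t-a$, and points with $t-s$ near $a$ and one argument in the unresolved block still produce phases filling an interval whose upper end is the full $\pi-2a$; this persists at every subsequent step, so no further frequency band ever clears and the band-by-band manoeuvre stalls after the first step. Likewise, in your ``evaluate at $\omega$ and $-\omega$'' scheme the two equations at $-\omega$ remain contaminated even when those at $+\omega$ are clean, so the $4\times4$ system cannot be solved for all four unknowns. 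The missing idea is the decoupling the paper performs in (\ref{3.1})--(\ref{3.2}): combining $(u_1+iu_2)(\pi+a-2\xi)$ with $(u_1-iu_2)(2\xi-\pi-a)$ yields, for each single point $\xi\in(\frac{3a}{2},\pi-\frac{a}{2})$, the pair of relations (\ref{3.4})--(\ref{3.5}) expressing $q(\xi),p(\xi)$ through $w_1(\xi),w_2(\xi)$ and a bilinear integral whose two argument ranges are exactly $[\xi+\frac{a}{2},\pi]$ and $[a,\pi-\xi+\frac{a}{2}]$. The correct peeling condition is that \emph{both} of these slots avoid the remaining support of $\widehat Q$ --- a condition on the physical variable $\xi$, not on a frequency band. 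With that reformulation your plan does close (the slots clear for $\xi>\pi-a$, giving $\widehat Q=0$ on $(\pi-a,\pi-\frac{a}{2})$; then for $\xi>\frac{\pi}{2}+\frac{a}{4}$, where the second slot $[a,\pi-\xi+\frac{a}{2}]\subset[a,\frac{\pi}{2}+\frac{a}{4})$ is handled precisely by the a priori data on $(\frac{3a}{2},\frac{\pi}{2}+\frac{a}{4})$), but as written you have only asserted that ``a chain of elementary interval inequalities must all come out right,'' while the mechanism you propose for verifying them would fail.
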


The paper is organized as follows. In the next section,  we introduce the characteristic functions of the boundary value problems $L_{1}(Q)$ and $L_{2}(Q)$. In Section 3, we prove Theorem \ref{th2} and provide a constructive method for the solution of Inverse Problem 1.

\section{characteristic functions}
	
	Let $Y(x,\lambda)$ be the fundamental matrix-solution of equation (\ref{1.1}) such that
\begin{align}
 Y(x,\lambda)=\begin{bmatrix}
    y_{1,1}(x,\lambda) & y_{1,2}(x,\lambda)   \\  y_{2,1}(x,\lambda) & y_{2,2}(x,\lambda)
  \end{bmatrix},\;\;
 Y(0,\lambda)=\begin{bmatrix}
    1 & 0   \\  0 & 1
  \end{bmatrix}. \nonumber
\end{align}
Then, for $j\in\{1,2\}$, eigenvalues of the problem $L_{j}(Q)$ coincide with zeros of the entire function
\begin{align}\label{2.1}
  \Delta_{j}(\lambda)=y_{j,2}(\pi,\lambda),
\end{align}
which is called characteristic function of $L_{j}(Q)$.

When $a\in[\frac{\pi}{3},\frac{\pi}{2}$), it follows from the relation (12) in \cite{Buter2} that
\begin{align}\label{2.3}
  Y(x,\lambda)=Y_{0}(x,\lambda)+Y_{1}(x,\lambda)+Y_{2}(x,\lambda),
\end{align}
where
\begin{align}\label{2.4}
  Y_{0}(x,\lambda)=\begin{bmatrix}
   \cos\lambda x & -\sin\lambda x   \\  \sin\lambda x  & \cos\lambda x
  \end{bmatrix},
\end{align}
\begin{align}\label{2.5}
  Y_{1}(x,\lambda)=B\int_{a}^{x}Y_{0}(x-t,\lambda)Q(t)Y_{0}(t-a,\lambda)dt,
\end{align}
\begin{align}\label{2.6}
  Y_{2}(x,\lambda)=B\int_{2a}^{x}Y_{0}(x-t,\lambda)Q(t)Y_{1}(t-a,\lambda)dt.
\end{align}

Combining these relations (\ref{2.3})-(\ref{2.6}) and taking the definition (\ref{2.1}) into account, by a rather tedious computation, we obtain
\begin{align}
 \Delta_{1}(\lambda)\!=\!-\sin\!\lambda\pi\!-\!\!\int_{a}^{\pi}\!\!q(t)\cos\!\lambda(\pi\!-\!2t\!+\!a)dt
 \!+\!\!\int_{a}^{\pi}\!\!p(t)\sin\!\lambda(\pi\!-\!2t\!+\!a)dt \nonumber\\
 -\int_{2a}^{\pi}dt\int_{a}^{t-a}\Big(q(t)q(s)+p(t)p(s)\Big)\sin\!\lambda(\pi\!-\!2t\!+\!2s)ds \qquad\;\,\nonumber\\
 +\int_{2a}^{\pi}dt\int_{a}^{t-a}\Big(q(t)p(s)-p(t)q(s)\Big)\cos\!\lambda(\pi\!-\!2t\!+\!2s)ds, \qquad\,\nonumber
\end{align}
and
\begin{align}
 \Delta_{2}(\lambda)\!=\!\cos\!\lambda\pi\!-\!\!\int_{a}^{\pi}\!\!q(t)\sin\!\lambda(\pi\!-\!2t\!+\!a)dt
 \!-\!\!\int_{a}^{\pi}\!\!p(t)\cos\!\lambda(\pi\!-\!2t\!+\!a)dt \nonumber \;\;\\
 +\int_{2a}^{\pi}dt\int_{a}^{t-a}\Big(q(t)p(s)-p(t)q(s)\Big)\sin\!\lambda(\pi\!-\!2t\!+\!2s)ds \qquad\;\nonumber\\
 +\int_{2a}^{\pi}dt\int_{a}^{t-a}\Big(q(t)q(s)+p(t)p(s)\Big)\cos\!\lambda(\pi\!-\!2t\!+\!2s)ds. \qquad\!\nonumber
\end{align}
Changing the variable and interchanging the order of integration, we obtain
\begin{align}\label{2.7}
 \Delta_{1}(\lambda)\!=\!-\sin\!\lambda\pi\!+\!\!\int_{a-\pi}^{\pi-a}\!\!v_{1}(x)\sin\!\lambda xdx\!
 +\!\!\int_{a-\pi}^{\pi-a}\!\!v_{2}(x)\cos\!\lambda xdx,
\end{align}
and
\begin{align}\label{2.8}
 \Delta_{2}(\lambda)\!=\!\cos\!\lambda\pi\!+\!\!\int_{a-\pi}^{\pi-a}\!\!v_{2}(x)\sin\!\lambda xdx
 \!-\!\int_{a-\pi}^{\pi-a}\!\!v_{1}(x)\cos\!\lambda xdx,\;
\end{align}
where
\begin{align}\label{2.9}
v_{1}(x)=\begin{cases}
\frac{1}{2}p(\frac{\pi+a-x}{2})\!-\!\frac{1}{2}\int_{\frac{\pi+2a-x}{2}}^{\pi}\Big[q(t)q(\frac{x+2t-\pi}{2})  \\
\qquad\qquad\qquad+p(t)p(\frac{x+2t-\pi}{2})\Big]dt, \;\: x\!\in\!(2a\!-\!\pi,\pi\!-\!2a),\\
\\
\frac{1}{2}p(\frac{\pi+a-x}{2}),\;\:x\!\in[a\!-\!\pi,2a\!-\!\pi]\cup[\pi\!-\!2a,\pi\!-\!a],
\end{cases}
\end{align}
\begin{align}\label{2.10}
v_{2}(x)=\begin{cases}
-\frac{1}{2}q(\frac{\pi+a-x}{2})\!+\!\frac{1}{2}\int_{\frac{\pi+2a-x}{2}}^{\pi}\Big[q(t)p(\frac{x+2t-\pi}{2})  \\
\qquad\qquad\qquad-p(t)q(\frac{x+2t-\pi}{2})\Big]dt, \;\: x\!\in\!(2a\!-\!\pi,\pi\!-\!2a),\\
\\
-\frac{1}{2}q(\frac{\pi+a-x}{2}),\;\:x\!\in[a\!-\!\pi,2a\!-\!\pi]\cup[\pi\!-\!2a,\pi\!-\!a].
\end{cases}\!
\end{align}
Using Euler's formula, the relations (\ref{2.7}) and (\ref{2.8}) take the forms
\begin{align}\label{2.11}
 \Delta_{1}(\lambda)=-\sin\lambda\pi+\int_{a-\pi}^{\pi-a}u_{1}(x)\exp(i\lambda x)dx
\end{align}
and
\begin{align}\label{2.12}
 \Delta_{2}(\lambda)=\cos\lambda\pi+\int_{a-\pi}^{\pi-a}u_{2}(x)\exp(i\lambda x)dx,
\end{align}
where
\begin{align}\label{2.13}
 u_{1}(x)=\frac{v_{1}(x)-v_{1}(-x)}{2i}+\frac{v_{2}(x)+v_{2}(-x)}{2},
\end{align}
\begin{align}\label{2.14}
 u_{2}(x)=\frac{v_{2}(x)-v_{2}(-x)}{2i}-\frac{v_{1}(x)+v_{1}(-x)}{2}.
\end{align}

The following lemma has been obtained (see Lemma 3.3 in \cite{W-Y3}).

\begin{lemma}\label{lem1}
The characteristic functions $\Delta_{1}(\lambda)$ and $\Delta_{2}(\lambda)$ are uniquely determined by specifying their zeros. Moreover, the following representations hold:
\begin{align}\label{2.15}
  \Delta_{1}(\lambda)=\pi(\lambda_{0,1}-\lambda)\prod_{|n|\in\mathbb{N}}\frac{\lambda_{n,1}-\lambda}{n}\exp\left(\frac{\lambda}{n}\right),\;\;
\end{align}
\begin{align}\label{2.16}
  \Delta_{2}(\lambda)=\prod_{n\in\mathbb{Z}}\frac{\lambda_{n,2}-\lambda}{n-\frac{1}{2}}\exp\left(\frac{\lambda}{n-\frac{1}{2}}\right).\qquad\qquad\!
\end{align}
\end{lemma}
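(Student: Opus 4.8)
The lemma bundles two assertions: (i) that $\Delta_1$ and $\Delta_2$ are recovered from their zero sequences, and (ii) the product formulas (\ref{2.15})--(\ref{2.16}). Since (ii) writes $\Delta_1,\Delta_2$ explicitly in terms of $\{\lambda_{n,1}\}_{n\in\mathbb{Z}}$ and $\{\lambda_{n,2}\}_{n\in\mathbb{Z}}$, it already contains (i); so the plan is to establish (ii), and I would do this for $\Delta_1$, the argument for $\Delta_2$ being entirely parallel (with $\cos\lambda\pi$ and the half-integer grid $\{n-\tfrac12\}$, which has no node at the origin).

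First I would read off the growth and the zeros of $\Delta_1$ from (\ref{2.11}): with $\Delta_1(\lambda)=-\sin\lambda\pi+G_1(\lambda)$, $G_1(\lambda):=\int_{a-\pi}^{\pi-a}u_1(x)\exp(i\lambda x)\,dx$, the inclusion $u_1\in L_2(a-\pi,\pi-a)\subset L_1(a-\pi,\pi-a)$ gives $|G_1(\lambda)|\le\|u_1\|_{L^1}\exp((\pi-a)|\Im\lambda|)$, so $\Delta_1$ is entire of order $1$ and exponential type $\pi$ (the sine term dominates), while by Theorem~\ref{th1} its zeros, counted with multiplicity, are exactly $\lambda_{n,1}=n+\kappa_{n,1}$, $n\in\mathbb{Z}$, with $\{\kappa_{n,1}\}_{n\in\mathbb{Z}}\in l_2$. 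Next I would set $f(\lambda):=$ the right-hand side of (\ref{2.15}) and verify it is a bona fide entire function with the same zeros: from $\log\!\big(\tfrac{\lambda_{n,1}-\lambda}{n}\big)+\tfrac{\lambda}{n}=\tfrac{\kappa_{n,1}}{n}+O\!\big(\tfrac{1+|\lambda|^{2}}{n^{2}}\big)$ and $\sum_{|n|\in\mathbb{N}}\tfrac{|\kappa_{n,1}|}{|n|}\le\|\kappa_{\cdot,1}\|_{l_2}\big(\sum_{|n|\in\mathbb{N}}n^{-2}\big)^{1/2}<\infty$, the defining product converges locally uniformly, so $f$ is entire of order $\le1$ with zero set $\{\lambda_{n,1}\}_{n\in\mathbb{Z}}$ (and for $\kappa_{n,1}\equiv0$ one recovers $f\equiv-\sin\lambda\pi$, which explains the leading constant $\pi$). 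A standard application of the Hadamard factorization theorem to the two order-$1$ functions $\Delta_1$ and $f$, which have identical zeros, then yields $\Delta_1(\lambda)/f(\lambda)=e^{b\lambda+c}$ for some $b,c\in\mathbb{C}$.

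The decisive step is to prove $b=0$ and $e^{c}=1$, and I would do it by comparing $\Delta_1$ and $f$ with $-\sin\lambda\pi$ along the imaginary axis $\lambda=it$ as $t\to+\infty$. On one hand $|\sin(it)\pi|\sim\tfrac12e^{\pi t}$ while $|G_1(it)|\le\|u_1\|_{L^1}e^{(\pi-a)t}=o(e^{\pi t})$, so $\Delta_1(it)/(-\sin(it)\pi)\to1$. On the other hand
\[
\frac{f(\lambda)}{-\sin\lambda\pi}=\Big(1-\frac{\lambda_{0,1}}{\lambda}\Big)\prod_{|n|\in\mathbb{N}}\Big(1-\frac{\kappa_{n,1}}{\lambda-n}\Big),
\]
and along $\lambda=it$ the prefactor tends to $1$ while $\sum_{|n|\in\mathbb{N}}\tfrac{|\kappa_{n,1}|}{|it-n|}\le\|\kappa_{\cdot,1}\|_{l_2}\big(\sum_{|n|\in\mathbb{N}}(t^{2}+n^{2})^{-1}\big)^{1/2}\to0$ forces the product to tend to $1$; hence $f(it)/(-\sin(it)\pi)\to1$ as well. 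Dividing, $e^{b(it)+c}=\Delta_1(it)/f(it)\to1$; comparing moduli forces $\Im b=0$ and $\Re c=0$, and then comparing arguments forces $b=0$ and $c\in2\pi i\mathbb{Z}$. Thus $\Delta_1\equiv f$, which is (\ref{2.15}); running the same three steps with $\cos\lambda\pi$ and the grid $n-\tfrac12$ yields (\ref{2.16}), and (i) follows at once.

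I expect the real obstacle to be exactly this last step --- ruling out the a priori admissible exponential factor $e^{b\lambda+c}$. It is the only place where the $l_2$-precision of the eigenvalue asymptotics of Theorem~\ref{th1} is genuinely used: it is needed both for the convergence of the canonical product $f$ and, more essentially, to force $f(it)/(-\sin(it)\pi)\to1$ along $i\mathbb{R}$; matching this against the soft Riemann--Lebesgue-type bound $G_1(it)=o(e^{\pi t})$ is what pins $b$ and $c$ down. Everything else --- the order-$1$ growth bound, the Hadamard factorization, and the convergence of the infinite products --- is routine.
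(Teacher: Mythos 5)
Your proof is correct and follows the standard route (Hadamard factorization of the order-one entire functions $\Delta_j$, convergence of the canonical product via the $l_2$-asymptotics of Theorem \ref{th1}, and elimination of the residual factor $e^{b\lambda+c}$ by comparing both sides with $-\sin\lambda\pi$, resp.\ $\cos\lambda\pi$, along the imaginary axis); the paper itself gives no proof of this lemma but cites Lemma 3.3 of \cite{W-Y3}, which argues in exactly this way. The only point worth making explicit is that you use that $\{\lambda_{n,j}\}_{n\in\mathbb{Z}}$ exhausts the zero set of $\Delta_j$ counted with multiplicity; this is part of the content of Theorem \ref{th1} as established in \cite{W-Y3} (via a Rouch\'e count against $-\sin\lambda\pi$ and $\cos\lambda\pi$), not an extra fact you need to supply.
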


From Lemma \ref{lem1}, we immediately obtain the following lemma, which plays an important role in proving Theorem \ref{th2}.

\begin{lemma}\label{lem2}
If $\lambda_{n,1}=\widetilde{\lambda}_{n,1}$, $\lambda_{n,2}=\widetilde{\lambda}_{n,2}$, $n\in\mathbb{Z}$, then $u_{1}(x)=\widetilde{u}_{1}(x)$ and $u_{2}(x)=\widetilde{u}_{2}(x)$ a.e. on $[a-\pi,\pi-a]$.
\end{lemma}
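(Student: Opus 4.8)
The plan is to deduce Lemma \ref{lem2} directly from Lemma \ref{lem1} by comparing the two entire functions $\Delta_j(\lambda)$ and $\widetilde\Delta_j(\lambda)$. Since $\lambda_{n,j}=\widetilde\lambda_{n,j}$ for all $n\in\mathbb{Z}$ and $j=1,2$, the product representations \eqref{2.15} and \eqref{2.16} show term by term that $\Delta_1(\lambda)=\widetilde\Delta_1(\lambda)$ and $\Delta_2(\lambda)=\widetilde\Delta_2(\lambda)$ for all $\lambda\in\mathbb{C}$. Subtracting the representations \eqref{2.11} and \eqref{2.12} from their tilded counterparts, the explicit $-\sin\lambda\pi$ and $\cos\lambda\pi$ terms cancel, leaving
\begin{align}\label{plan1}
 \int_{a-\pi}^{\pi-a}\bigl(u_{1}(x)-\widetilde u_{1}(x)\bigr)\exp(i\lambda x)\,dx=0,\qquad
 \int_{a-\pi}^{\pi-a}\bigl(u_{2}(x)-\widetilde u_{2}(x)\bigr)\exp(i\lambda x)\,dx=0
\end{align}
for all $\lambda\in\mathbb{C}$, in particular for all $\lambda\in\mathbb{R}$.

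The next step is to invoke injectivity of the Fourier transform. For each $k=1,2$, the function $u_k-\widetilde u_k$ lies in $L_1(a-\pi,\pi-a)$ (indeed in $L_2$, by the structure of \eqref{2.9}--\eqref{2.10} and the $L_2$ hypotheses on the potentials), so extending it by zero to all of $\mathbb{R}$ we obtain an $L_1(\mathbb{R})$ function whose Fourier transform vanishes identically by \eqref{plan1}. Hence $u_k(x)=\widetilde u_k(x)$ for a.e.\ $x\in[a-\pi,\pi-a]$, which is precisely the assertion of the lemma. One could equally cite the completeness of the exponential system $\{\exp(i\lambda x)\}_{\lambda\in\mathbb{R}}$ in $L_2$ of a finite interval, or analyticity: since the left-hand sides of \eqref{plan1} are entire functions of $\lambda$ that vanish on $\mathbb{R}$, they vanish everywhere, and taking $\lambda=n\in\mathbb{Z}$ (respectively $\lambda=n-\tfrac12$) recovers the Fourier coefficients of $u_k-\widetilde u_k$ with respect to an orthogonal basis of the interval, forcing them all to be zero.

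There is essentially no obstacle here; the content of the lemma is already packaged in Lemma \ref{lem1}, and the only genuine input is the elementary fact that a finite-interval integrable function with vanishing Fourier transform is zero a.e. The one point deserving a line of care is the cancellation of the leading terms $-\sin\lambda\pi$ and $\cos\lambda\pi$: these are identical in the problem $L_j(Q)$ and $L_j(\widetilde Q)$ because they come from the delay-free part $Y_0(x,\lambda)$ in \eqref{2.4}, which does not involve the potentials at all, so they drop out of the difference automatically. After that, the identification of $u_1,u_2$ with $\widetilde u_1,\widetilde u_2$ on $[a-\pi,\pi-a]$ is immediate, completing the proof.
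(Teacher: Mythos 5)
Your proposal is correct and follows essentially the same route as the paper: both deduce $\Delta_j=\widetilde\Delta_j$ from the product representations of Lemma \ref{lem1}, cancel the potential-independent terms $-\sin\lambda\pi$ and $\cos\lambda\pi$, and conclude from the vanishing of the resulting finite-interval Fourier integrals. The paper specializes to $\lambda=n\in\mathbb{Z}$ and invokes completeness of $\{\exp(inx)\}_{n\in\mathbb{Z}}$ in $L_{2}[a-\pi,\pi-a]$, which is one of the equivalent closing arguments you list.
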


\begin{proof}
Since $\lambda_{n,1}=\widetilde{\lambda}_{n,1}$, $\lambda_{n,2}=\widetilde{\lambda}_{n,2}$, $n\in\mathbb{Z}$, according to Lemma \ref{lem1}, we have
\begin{equation}
  \Delta_{1}(\lambda)=\widetilde{\Delta}_{1}(\lambda),\quad \Delta_{2}(\lambda)=\widetilde{\Delta}_{2}(\lambda),\nonumber
\end{equation}
which yields
\begin{equation}
  \Delta_{1}(\lambda)\!+\!\sin\lambda\pi\!=\!\widetilde{\Delta}_{1}(\lambda)\!+\!\sin\lambda\pi,\quad \Delta_{2}(\lambda)\!-\!\cos\lambda\pi\!=\!\widetilde{\Delta}_{2}(\lambda)\!-\!\cos\lambda\pi.\nonumber
\end{equation}
From (\ref{2.11}) and (\ref{2.12}), one has
\begin{align}
  \int_{a-\pi}^{\pi-a}u_{1}(x)\exp(i\lambda x)dx=\int_{a-\pi}^{\pi-a}\widetilde{u}_{1}(x)\exp(i\lambda x)dx,\nonumber\\
  \int_{a-\pi}^{\pi-a}u_{2}(x)\exp(i\lambda x)dx=\int_{a-\pi}^{\pi-a}\widetilde{u}_{2}(x)\exp(i\lambda x)dx.\nonumber
\end{align}
Taking $\lambda=n\in\mathbb{Z}$ in the above two equations, we have
\begin{align}\label{2.17}
  \int_{a-\pi}^{\pi-a}u_{1}(x)\exp(in x)dx=\int_{a-\pi}^{\pi-a}\widetilde{u}_{1}(x)\exp(in x)dx,
\end{align}
\begin{align}\label{2.18}
  \int_{a-\pi}^{\pi-a}u_{2}(x)\exp(in x)dx=\int_{a-\pi}^{\pi-a}\widetilde{u}_{2}(x)\exp(in x)dx.
\end{align}
Since the system $\{\exp(inx)\}_{n\in\mathbb{Z}}$ is complete in $L_{2}[a-\pi,\pi-a]$, the relations (\ref{2.17}) and (\ref{2.18}) imply
$u_{1}(x)\!=\!\widetilde{u}_{1}(x),\; u_{2}(x)\!=\!\widetilde{u}_{2}(x)$, a.e. on $[a-\pi,\pi-a]$.
\end{proof}

\section{Proof of Theorem 1.2}
	
	Before proceeding directly to the proof of Theorem \ref{th2}, we fulfil some preparatory work. Let
\begin{align}\label{3.1}
  w_{1}(x)=-(u_{1}+iu_{2})(\pi+a-2x)-(u_{1}-iu_{2})(2x-\pi-a),\!\!\!\!
\end{align}
\begin{align}\label{3.2}
  w_{2}(x)=(iu_{1}-u_{2})(\pi+a-2x)-(iu_{1}+u_{2})(2x-\pi-a),
\end{align}
then the functions $u_{1}(x)$ and $u_{2}(x)$ in $L_{2}(a-\pi,\pi-a)$ uniquely determine the functions $w_{1}(x)$ and $w_{2}(x)$ in $L_{2}(a,\pi)$.

When $x\!\in\![a\!-\!\pi,2a\!-\!\pi]\cup[\pi\!-\!2a,\pi\!-\!a]$, according to (\ref{2.9})-(\ref{2.10}) and (\ref{2.13})-(\ref{2.14}), one can calculate \begin{align}
  -2(u_{1}(x)+iu_{2}(x))=(q\!+\!ip)\!\left(\frac{\pi\!+\!a\!-\!x}{2}\right),\nonumber
\end{align}
\begin{align}
  -2(u_{1}(x)-iu_{2}(x))=(q\!-\!ip)\!\left(\frac{\pi\!+\!a\!+\!x}{2}\right).\nonumber
\end{align}
The changes of variables $t=\frac{\pi+a-x}{2}$ and $t=\frac{\pi+a+x}{2}$, respectively, lead to
\begin{align}
(q\!+\!ip)(t)=-2(u_{1}+iu_{2})\!(\pi\!+\!a\!-\!2t),\quad t\in[a,\frac{3a}{2}]\cup[\pi-\frac{a}{2},\pi],\nonumber
\end{align}
\begin{align}
(q\!-\!ip)(t)=-2(u_{1}-iu_{2})\!(2t\!-\!\pi\!-\!a),\quad t\in[a,\frac{3a}{2}]\cup[\pi-\frac{a}{2},\pi].\nonumber
\end{align}
Summing up two equations above and subtracting one from the other, and taking (\ref{3.1})-(\ref{3.2}) into account, we get
\begin{align}
  q(t)=w_{1}(t),\quad p(t)=w_{2}(t),\quad t\in[a,\frac{3a}{2}]\cup[\pi-\frac{a}{2},\pi].\nonumber
\end{align}

Thus, we have proved the following lemma.

\begin{lemma}\label{lem3}
The following relations hold:
\begin{equation}\label{3.3}
  q|_{[a,\frac{3a}{2}]\cup[\pi-\frac{a}{2},\pi]}=w_{1}|_{[a,\frac{3a}{2}]\cup[\pi-\frac{a}{2},\pi]},\;
  p|_{[a,\frac{3a}{2}]\cup[\pi-\frac{a}{2},\pi]}=w_{2}|_{[a,\frac{3a}{2}]\cup[\pi-\frac{a}{2},\pi]}.
\end{equation}
\end{lemma}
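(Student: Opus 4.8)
The plan is to read off $q$ and $p$ on the two outer subintervals directly from the explicit expressions for $u_{1}$ and $u_{2}$. First I would restrict attention to $x\in[a-\pi,2a-\pi]\cup[\pi-2a,\pi-a]$. On this set the formulas (\ref{2.9})--(\ref{2.10}) collapse to their non-integral branches, $v_{1}(x)=\frac{1}{2}p(\frac{\pi+a-x}{2})$ and $v_{2}(x)=-\frac{1}{2}q(\frac{\pi+a-x}{2})$; moreover this set is invariant under $x\mapsto-x$, so the reflected values $v_{1}(-x)$ and $v_{2}(-x)$ are again given by the same branch, now with argument $\frac{\pi+a+x}{2}$. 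Substituting these four expressions into the definitions (\ref{2.13})--(\ref{2.14}), I would form the combinations $u_{1}+iu_{2}$ and $u_{1}-iu_{2}$. The key algebraic fact is that the terms involving $q(\frac{\pi+a+x}{2})$ and $p(\frac{\pi+a+x}{2})$ cancel in $u_{1}+iu_{2}$, while those involving $q(\frac{\pi+a-x}{2})$ and $p(\frac{\pi+a-x}{2})$ cancel in $u_{1}-iu_{2}$, leaving
\begin{align}
 -2(u_{1}+iu_{2})(x)=(q+ip)\!\left(\frac{\pi+a-x}{2}\right),\qquad -2(u_{1}-iu_{2})(x)=(q-ip)\!\left(\frac{\pi+a+x}{2}\right) \nonumber
\end{align}
for $x\in[a-\pi,2a-\pi]\cup[\pi-2a,\pi-a]$.

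Next I would change the variable, putting $t=\frac{\pi+a-x}{2}$ in the first identity and $t=\frac{\pi+a+x}{2}$ in the second. As $x$ runs over $[a-\pi,2a-\pi]\cup[\pi-2a,\pi-a]$, the variable $t$ runs over $[a,\frac{3a}{2}]\cup[\pi-\frac{a}{2},\pi]$ in both cases (the endpoints $a-\pi,2a-\pi,\pi-2a,\pi-a$ map to $\pi,\pi-\frac{a}{2},\frac{3a}{2},a$ under the first substitution and to $a,\frac{3a}{2},\pi-\frac{a}{2},\pi$ under the second). Hence both identities hold for all such $t$, giving
\begin{align}
 (q+ip)(t)=-2(u_{1}+iu_{2})(\pi+a-2t),\qquad (q-ip)(t)=-2(u_{1}-iu_{2})(2t-\pi-a). \nonumber
\end{align}

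Finally, adding these two relations and dividing by $2$ produces $q(t)=-(u_{1}+iu_{2})(\pi+a-2t)-(u_{1}-iu_{2})(2t-\pi-a)$, which is precisely $w_{1}(t)$ by (\ref{3.1}); subtracting them and dividing by $2i$ produces $p(t)=(iu_{1}-u_{2})(\pi+a-2t)-(iu_{1}+u_{2})(2t-\pi-a)$, which is precisely $w_{2}(t)$ by (\ref{3.2}). This establishes both asserted identities on $[a,\frac{3a}{2}]\cup[\pi-\frac{a}{2},\pi]$. I expect the only genuinely delicate point to be bookkeeping: one must select the correct branch of (\ref{2.9})--(\ref{2.10}) --- legitimate because the assumption $a\in[\frac{\pi}{3},\frac{\pi}{2})$ is exactly what makes (\ref{2.3})--(\ref{2.6}), and hence (\ref{2.13})--(\ref{2.14}), valid --- verify that the reflection $x\mapsto-x$ does not leave the non-integral region, and track the cancellations in the $u_{1}\pm iu_{2}$ combinations without sign errors; everything else is a routine substitution.
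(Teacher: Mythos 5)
Your proposal is correct and follows essentially the same route as the paper: restricting to $x\in[a-\pi,2a-\pi]\cup[\pi-2a,\pi-a]$ where \eqref{2.9}--\eqref{2.10} reduce to their non-integral branches, forming $u_{1}\pm iu_{2}$ (the cancellations you describe do occur, giving $-2(u_{1}+iu_{2})(x)=(q+ip)(\tfrac{\pi+a-x}{2})$ and $-2(u_{1}-iu_{2})(x)=(q-ip)(\tfrac{\pi+a+x}{2})$), changing variables, and then adding/subtracting to identify $q=w_{1}$ and $p=w_{2}$ via \eqref{3.1}--\eqref{3.2}. The extra bookkeeping you flag (invariance of the set under $x\mapsto-x$ and the endpoint images under both substitutions) checks out, so there is nothing to add.
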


\qquad

When $x\!\in\!(2a\!-\!\pi,\pi\!-\!2a)$, according to (\ref{2.9})-(\ref{2.10}) and (\ref{2.13})-(\ref{2.14}), one can calculate
\begin{align}
  -2(u_{1}(x)+iu_{2}(x))=(q\!+\!ip)\!\left(\frac{\pi\!+\!a\!-\!x}{2}\right) \qquad\qquad\qquad\qquad\qquad\nonumber\\
  -\!\!\int_{\frac{\pi+2a-x}{2}}^{\pi}\!(q(t)\!+\!ip(t))(iq\!+\!p)\!\!\left(\frac{x\!+\!2t\!-\!\pi}{2}\right)dt,\!\!\nonumber
\end{align}
\begin{align}
  -2(u_{1}(x)-iu_{2}(x))=(q\!-\!ip)\!\left(\frac{\pi\!+\!a\!+\!x}{2}\right)\qquad\qquad\qquad\qquad\qquad\nonumber\\
  -\!\!\int_{\frac{\pi+2a+x}{2}}^{\pi}\!(q(t)\!-\!ip(t))(-iq\!+\!p)\!\!\left(\frac{2t\!-\!x\!-\!\pi}{2}\right)dt.\!\!\!\!\!\!\nonumber
\end{align}
The changes of variables $\xi=\frac{\pi+a-x}{2}$ and $\xi=\frac{\pi+a+x}{2}$, respectively, lead to
\begin{align}
  -2(u_{1}+iu_{2})(\pi+a-2\xi)\!=\!(q\!+\!ip)(\xi) \qquad\qquad\qquad\qquad\qquad\;\;\;\;\nonumber\\
  -\!\int_{\xi+\frac{a}{2}}^{\pi}\!(q(t)\!+\!ip(t))(iq\!+\!p)\!\!\left(t\!-\!\xi\!+\!\frac{a}{2}\right)\!dt,\nonumber
\end{align}
\begin{align}
  -2(u_{1}-iu_{2})(2\xi-\pi-a)\!=\!(q\!-\!ip)(\xi) \qquad\qquad\qquad\qquad\qquad\;\;\nonumber\\
  -\!\int_{\xi+\frac{a}{2}}^{\pi}\!(q(t)\!-\!ip(t))(-iq\!+\!p)\!\!\left(t\!-\!\xi\!+\!\frac{a}{2}\right)\!dt\!\!\!\!\!\!\nonumber
\end{align}
for $\xi\!\in\!(\frac{3a}{2},\pi\!-\!\frac{a}{2})$. Summing up two equations above and subtracting one from the other, and taking (\ref{3.1})-(\ref{3.2}) into account, we get
\begin{align}\label{3.4}
  w_{1}(\xi)=q(\xi)\!-\!\int_{\xi+\frac{a}{2}}^{\pi}\left[q(t)p\!\left(t\!-\!\xi\!+\!\frac{a}{2}\right)\!-\!p(t)q\!\left(t\!-\!\xi\!
  +\!\frac{a}{2}\right)\right]dt,
\end{align}
\begin{align}\label{3.5}
  w_{2}(\xi)=p(\xi)\!-\!\int_{\xi+\frac{a}{2}}^{\pi}\left[q(t)q\!\left(t\!-\!\xi\!+\!\frac{a}{2}\right)\!+\!p(t)p\!\left(t\!-\!\xi\!
  +\!\frac{a}{2}\right)\right]dt
\end{align}
for $\xi\!\in\!(\frac{3a}{2},\pi\!-\!\frac{a}{2})$.

When $\xi\!\in\![\pi\!-\!a,2a]\cup[\frac{\pi}{2}\!+\!\frac{3a}{4},\pi\!-\!\frac{a}{2})$, it is easy to find that
\begin{align}
  \pi-\frac{a}{2}\leq\xi+\frac{a}{2}\leq t\leq\pi,\quad  a\leq t-\xi+\frac{a}{2}\leq\frac{3a}{2}.   \nonumber
\end{align}
Hence, according to Lemma \ref{lem3}, the relations (\ref{3.4}) and (\ref{3.5}) yield
\begin{align}
  q(\xi)=w_{1}(\xi)\!+\!\int_{\xi+\frac{a}{2}}^{\pi}\left[w_{1}(t)w_{2}\!\left(t\!-\!\xi\!+\!\frac{a}{2}\right)\!-\!w_{2}(t)w_{1}\!\left(t\!-\!\xi\!
  +\!\frac{a}{2}\right)\right]dt, \nonumber
\end{align}
\begin{align}
  p(\xi)=w_{2}(\xi)\!+\!\int_{\xi+\frac{a}{2}}^{\pi}\left[w_{1}(t)w_{1}\!\left(t\!-\!\xi\!+\!\frac{a}{2}\right)\!+\!w_{2}(t)w_{2}\!\left(t\!-\!\xi\!
  +\!\frac{a}{2}\right)\right]dt. \nonumber
\end{align}

Thus, we have proved the following lemma.

\begin{lemma}\label{lem4}
The following relations hold:
\begin{align}\label{3.6}
\begin{cases}
  q|_{[\pi-a,2a]\cup[\frac{\pi}{2}+\frac{3a}{4},\pi-\frac{a}{2})}
  =(w_{1}+\gamma_{1})|_{[\pi-a,2a]\cup[\frac{\pi}{2}+\frac{3a}{4},\pi-\frac{a}{2})},\\
  p|_{[\pi-a,2a]\cup[\frac{\pi}{2}\!+\!\frac{3a}{4},\pi\!-\!\frac{a}{2})}
  =(w_{2}+\gamma_{2})|_{[\pi-a,2a]\cup[\frac{\pi}{2}+\frac{3a}{4},\pi-\frac{a}{2})},
\end{cases}
\end{align}
where
\begin{align}\label{3.7}
\begin{cases}
  \gamma_{1}(x)=\int_{x+\frac{a}{2}}^{\pi}\left[w_{1}(t)w_{2}\!\left(t\!-\!x\!+\!\frac{a}{2}\right)\!-\!w_{2}(t)w_{1}\!\left(t\!-\!x\!
  +\!\frac{a}{2}\right)\right]dt,\!\!\\
  \gamma_{2}(x)=\int_{x+\frac{a}{2}}^{\pi}\left[w_{1}(t)w_{1}\!\left(t\!-\!x\!+\!\frac{a}{2}\right)\!+\!w_{2}(t)w_{2}\!\left(t\!-\!x\!
  +\!\frac{a}{2}\right)\right]dt
\end{cases}\!\!\!\!\!\!\!\!
\end{align}
for $x\in[\pi\!-\!a,2a]\cup[\frac{\pi}{2}\!+\!\frac{3a}{4},\pi\!-\!\frac{a}{2})$.
\end{lemma}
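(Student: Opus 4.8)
The idea is to run one ``bootstrap step'' outward from the portion of $q,p$ already recovered in Lemma~\ref{lem3}, using the explicit formulas (\ref{2.9})--(\ref{2.14}) on the inner interval $(2a-\pi,\pi-2a)$, where $v_1,v_2$ (and hence $u_1,u_2$) acquire their quadratic contributions.

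First I would note that for $a\in[\frac{\pi}{3},\frac{2\pi}{5})$ the interval $(2a-\pi,\pi-2a)$ is nonempty and symmetric about the origin, so whenever $x$ lies in it, so does $-x$, and in (\ref{2.9})--(\ref{2.10}) the integral branch is the active one for both $v_k(x)$ and $v_k(-x)$. Feeding this into the symmetrized combinations (\ref{2.13})--(\ref{2.14}) and simplifying, one expresses $-2(u_1\pm iu_2)(x)$ as the linear term $(q\pm ip)\big(\frac{\pi+a\mp x}{2}\big)$ plus a convolution-type double integral of $q$ and $p$. The substitutions $\xi=\frac{\pi+a-x}{2}$ and $\xi=\frac{\pi+a+x}{2}$ then carry $x\in(2a-\pi,\pi-2a)$ to $\xi\in(\frac{3a}{2},\pi-\frac{a}{2})$; adding and subtracting the two resulting identities and invoking the definitions (\ref{3.1})--(\ref{3.2}) of $w_1,w_2$ yields the nonlinear relations (\ref{3.4})--(\ref{3.5}), which read off $w_1(\xi),w_2(\xi)$ as $q(\xi),p(\xi)$ corrected by an integral over $t\in[\xi+\frac{a}{2},\pi]$ of the bilinear expressions $q(t)p(t-\xi+\frac{a}{2})-p(t)q(t-\xi+\frac{a}{2})$ and $q(t)q(t-\xi+\frac{a}{2})+p(t)p(t-\xi+\frac{a}{2})$.

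The next step is to restrict $\xi$ to $[\pi-a,2a]\cup[\frac{\pi}{2}+\frac{3a}{4},\pi-\frac{a}{2})$. On this set one checks the elementary estimates $\pi-\frac{a}{2}\le\xi+\frac{a}{2}\le t\le\pi$ and $a\le t-\xi+\frac{a}{2}\le\frac{3a}{2}$ for every $t$ in the range of integration; here $a\ge\frac{\pi}{3}$ (in fact $a\ge\frac{2\pi}{7}$ already suffices) is precisely what is needed. Consequently both $t$ and the shifted argument $t-\xi+\frac{a}{2}$ lie in $[a,\frac{3a}{2}]\cup[\pi-\frac{a}{2},\pi]$, where by Lemma~\ref{lem3} we already know $q=w_1$ and $p=w_2$. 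Substituting these identities into the integrands of (\ref{3.4})--(\ref{3.5}) converts the right-hand sides into explicit functionals of $w_1,w_2$; solving the resulting pair for $q(\xi)$ and $p(\xi)$ produces exactly (\ref{3.6})--(\ref{3.7}), which is the assertion of the lemma.

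The genuinely delicate points are, first, keeping careful track through each change of variable of which branch of (\ref{2.9})--(\ref{2.10}) is in force, so that the convolution structure of (\ref{3.4})--(\ref{3.5}) comes out correctly; and second, pinning down the admissible sub-interval, which is forced \emph{both} by $a\ge\frac{\pi}{3}$ (so that $[\pi-a,2a]$ is nondegenerate and the shifted arguments stay inside the Lemma~\ref{lem3} set) \emph{and} by $a<\frac{2\pi}{5}$ (so that $[\frac{\pi}{2}+\frac{3a}{4},\pi-\frac{a}{2})$ is nonempty). Once these constraints are respected, the remaining work is routine manipulation of the explicit formulas.
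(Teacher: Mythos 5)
Your proposal is correct and follows essentially the same route as the paper: derive the nonlinear identities (\ref{3.4})--(\ref{3.5}) on $(\frac{3a}{2},\pi-\frac{a}{2})$ from (\ref{2.9})--(\ref{2.14}) via the two substitutions, then restrict $\xi$ to $[\pi-a,2a]\cup[\frac{\pi}{2}+\frac{3a}{4},\pi-\frac{a}{2})$ so that both $t$ and $t-\xi+\frac{a}{2}$ fall in the set covered by Lemma~\ref{lem3}, and replace $q,p$ by $w_1,w_2$ in the integrands. The interval checks you record (including the roles of $a\ge\frac{\pi}{3}$ and $a<\frac{2\pi}{5}$) match the paper's argument.
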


Let $I_{1}=[a,\frac{3a}{2}]\cup[\pi-a,2a]\cup[\frac{\pi}{2}\!+\!\frac{3a}{4},\pi]$. Since $a\in[\frac{\pi}{3},\frac{2\pi}{5})$, three subintervals $[a,\frac{3a}{2}]$, $[\pi-a,2a]$ and $[\frac{\pi}{2}\!+\!\frac{3a}{4},\pi]$ are not intersecting with each other. By combining Lemmas \ref{lem2}, \ref{lem3} and \ref{lem4}, and taking (\ref{3.1})-(\ref{3.2}) into account, we can obtain the following lemma.

\begin{lemma}\label{lem5}
If $\lambda_{n,1}=\widetilde{\lambda}_{n,1}$, $\lambda_{n,2}=\widetilde{\lambda}_{n,2}$, $n\in\mathbb{Z}$, then $q|_{I_{1}}=\widetilde{q}|_{I_{1}}$ and $p|_{I_{1}}=\widetilde{p}|_{I_{1}}$. Thus, the specification of two spectra $\{\lambda_{n,1}\}_{n\in\mathbb{Z}}$ and $\{\lambda_{n,2}\}_{n\in\mathbb{Z}}$ uniquely determines the potentials $q$ and $p$ on the interval $I_{1}$.
\end{lemma}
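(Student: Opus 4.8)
The plan is to derive Lemma~\ref{lem5} by stitching together Lemmas~\ref{lem2}, \ref{lem3} and \ref{lem4}, the point being that the spectral hypothesis enters \emph{only} through Lemma~\ref{lem2}. From $\lambda_{n,1}=\widetilde\lambda_{n,1}$ and $\lambda_{n,2}=\widetilde\lambda_{n,2}$ for all $n\in\mathbb{Z}$, Lemma~\ref{lem2} gives $u_1=\widetilde u_1$ and $u_2=\widetilde u_2$ a.e.\ on $[a-\pi,\pi-a]$; since $(\ref{3.1})$--$(\ref{3.2})$ express $w_1,w_2$ on $[a,\pi]$ as fixed linear combinations of the reflected/translated functions $x\mapsto(u_1\pm iu_2)(\pi+a-2x)$ and $x\mapsto(u_1\pm iu_2)(2x-\pi-a)$, this immediately yields $w_1=\widetilde w_1$ and $w_2=\widetilde w_2$ a.e.\ on $[a,\pi]$. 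From here no further spectral input is needed: everything below is a deterministic consequence of the equality of the pair $(w_1,w_2)$ with $(\widetilde w_1,\widetilde w_2)$.

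Next I would run through the three blocks of $I_1$ separately. On $[a,\frac{3a}{2}]$, Lemma~\ref{lem3} gives $q=w_1$, $p=w_2$ (and the same with tildes), so $w_i=\widetilde w_i$ forces $q=\widetilde q$, $p=\widetilde p$ there; the identical reasoning applies on $[\pi-\frac{a}{2},\pi]$, hence in particular on its subinterval lying inside $[\frac{\pi}{2}+\frac{3a}{4},\pi]$. For the remaining parts of $I_1$, namely $[\pi-a,2a]$ and $[\frac{\pi}{2}+\frac{3a}{4},\pi-\frac{a}{2})$, Lemma~\ref{lem4} gives $q=w_1+\gamma_1$, $p=w_2+\gamma_2$, where by $(\ref{3.7})$ the corrections $\gamma_1,\gamma_2$ are written entirely in terms of $w_1,w_2$; applying the same formulas to the tilde objects yields $\widetilde q=\widetilde w_1+\widetilde\gamma_1$, $\widetilde p=\widetilde w_2+\widetilde\gamma_2$ with $\widetilde\gamma_i$ built from $\widetilde w_i$, so $w_i=\widetilde w_i$ gives $\gamma_i=\widetilde\gamma_i$ and hence $q=\widetilde q$, $p=\widetilde p$ on those intervals too. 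Because $a<\frac{2\pi}{5}$ forces $\frac{\pi}{2}+\frac{3a}{4}<\pi-\frac{a}{2}$, the sets $[\frac{\pi}{2}+\frac{3a}{4},\pi-\frac{a}{2})$ and $[\pi-\frac{a}{2},\pi]$ exactly tile $[\frac{\pi}{2}+\frac{3a}{4},\pi]$, and $a\in[\frac{\pi}{3},\frac{2\pi}{5})$ makes the three blocks of $I_1$ pairwise non-overlapping, so the four pieces above cover $I_1$ without clash. Concatenating them proves $q|_{I_1}=\widetilde q|_{I_1}$ and $p|_{I_1}=\widetilde p|_{I_1}$; since these restrictions were recovered from the two spectra alone, the uniqueness assertion follows.

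There is no substantive obstacle here — the argument is essentially bookkeeping — but the one point that must be checked carefully has already been isolated in the passage leading up to Lemma~\ref{lem4}: for $x$ in the Lemma~\ref{lem4} intervals the integrands in $(\ref{3.7})$ only ever sample $w_1,w_2$ at points of $[a,\frac{3a}{2}]\cup[\pi-\frac{a}{2},\pi]$, so $\gamma_i$ is genuinely a functional of $w_i$ (equivalently, of the spectral data) rather than of the still-unknown $q,p$. The required inclusions $\pi-\frac{a}{2}\le x+\frac{a}{2}\le t\le\pi$ and $a\le t-x+\frac{a}{2}\le\frac{3a}{2}$, together with the disjointness of $[a,\frac{3a}{2}]$, $[\pi-a,2a]$, $[\frac{\pi}{2}+\frac{3a}{4},\pi]$, are precisely the elementary inequalities that use $a\ge\frac{\pi}{3}$ and $a<\frac{2\pi}{5}$, and this is the only place the restriction on the delay is invoked.
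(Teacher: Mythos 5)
Your argument is correct and follows essentially the same route as the paper: Lemma \ref{lem2} gives $u_i=\widetilde u_i$, hence $w_i=\widetilde w_i$ on $[a,\pi]$ via (\ref{3.1})--(\ref{3.2}), and then Lemmas \ref{lem3} and \ref{lem4} transfer this to $q,p$ on the pieces of $I_1$. If anything you are slightly more careful than the paper's own proof, which invokes Lemma \ref{lem4} only for $[\pi-a,2a]$ and leaves the coverage of $[\frac{\pi}{2}+\frac{3a}{4},\pi-\frac{a}{2})$ implicit, whereas you spell out how that piece together with $[\pi-\frac{a}{2},\pi]$ tiles the third block of $I_1$.
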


\begin{proof}
Since $\lambda_{n,1}=\widetilde{\lambda}_{n,1}$, $\lambda_{n,2}=\widetilde{\lambda}_{n,2}$, $n\in\mathbb{Z}$, according to Lemma \ref{lem2}, we have
\begin{align}\label{3.8}
  u_{1}(x)=\widetilde{u}_{1}(x),\quad u_{2}(x)=\widetilde{u}_{2}(x),\quad a.e. \;on \;[a-\pi,\pi-a].\!\!\!
\end{align}
In view of (\ref{3.1}), (\ref{3.2}) and (\ref{3.8}), we get
\begin{align}\label{3.9}
  w_{1}(x)=\widetilde{w}_{1}(x),\quad w_{2}(x)=\widetilde{w}_{2}(x),\quad a.e. \;on \;[a,\pi].\qquad\;
\end{align}
According to Lemma \ref{lem3}, it follows from (\ref{3.9}) that
\begin{align}
  q(x)=\widetilde{q}(x),\quad p(x)=\widetilde{p}(x),\quad a.e. \;on \;[a,\frac{3a}{2}]\cup[\pi-\frac{a}{2},\pi].\!\!\!\!\!\!\!\!\nonumber
\end{align}
According to Lemma \ref{lem4}, it follows from (\ref{3.9}) that
\begin{align}
  q(x)=\widetilde{q}(x),\quad p(x)=\widetilde{p}(x),\quad a.e. \;on \;[\pi-a,2a].\qquad\;\nonumber
\end{align}
So, we have $q|_{I_{1}}=\widetilde{q}|_{I_{1}}$ and $p|_{I_{1}}=\widetilde{p}|_{I_{1}}$.
\end{proof}

Next, for simplicity, we denote
\begin{align}\label{3.10}
f_{1}=q|_{[\frac{\pi}{2}+\frac{3a}{4},\pi]},\qquad f_{2}=p|_{[\frac{\pi}{2}+\frac{3a}{4},\pi]},
\end{align}
When $\xi\!\in\!I_{2}:=(2a,\frac{\pi}{2}\!+\!\frac{3a}{4})$, it is easy to find that
\begin{align}
  \frac{\pi}{2}+\frac{3a}{4}\leq\xi+\frac{a}{2}\leq t\leq\pi,\quad  a\leq t-\xi+\frac{a}{2}\leq\frac{3a}{2}.   \nonumber
\end{align}
Hence, according to Lemma \ref{lem3} and the equation (\ref{3.10}), the relations (\ref{3.4}) and (\ref{3.5}) yield

\begin{align}
  q(\xi)=w_{1}(\xi)\!+\!\int_{\xi+\frac{a}{2}}^{\pi}\left[f_{1}(t)w_{2}\!\left(t\!-\!\xi\!+\!\frac{a}{2}\right)\!-\!f_{2}(t)w_{1}\!\left(t\!-\!\xi\!
  +\!\frac{a}{2}\right)\right]dt, \nonumber
\end{align}
\begin{align}
  p(\xi)=w_{2}(\xi)\!+\!\int_{\xi+\frac{a}{2}}^{\pi}\left[f_{1}(t)w_{1}\!\left(t\!-\!\xi\!+\!\frac{a}{2}\right)\!+\!f_{2}(t)w_{2}\!\left(t\!-\!\xi\!
  +\!\frac{a}{2}\right)\right]dt. \nonumber
\end{align}

Thus, we have proved the following lemma.

\begin{lemma}\label{lem6}
The following relations hold:
\begin{align}\label{3.11}
  q|_{I_{2}}=(w_{1}+\delta_{1})|_{I_{2}},\quad
  p|_{I_{2}}=(w_{2}+\delta_{2})|_{I_{2}},
\end{align}
where
\begin{align}\label{3.12}
\begin{cases}
  \delta_{1}(x)=\int_{x+\frac{a}{2}}^{\pi}\left[f_{1}(t)w_{2}\!\left(t\!-\!x\!+\!\frac{a}{2}\right)\!-\!f_{2}(t)w_{1}\!\left(t\!-\!x\!
  +\!\frac{a}{2}\right)\right]dt,\!\!\\
  \delta_{2}(x)=\int_{x+\frac{a}{2}}^{\pi}\left[f_{1}(t)w_{1}\!\left(t\!-\!x\!+\!\frac{a}{2}\right)\!+\!f_{2}(t)w_{2}\!\left(t\!-\!x\!
  +\!\frac{a}{2}\right)\right]dt,
\end{cases}\!\!\!\!\!
\end{align}
for $x\in I_{2}$.
\end{lemma}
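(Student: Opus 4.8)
The plan is to read the assertion straight off the master identities (\ref{3.4})--(\ref{3.5}), which already hold for every $\xi\in(\tfrac{3a}{2},\pi-\tfrac a2)$. The only work is to check that when $\xi$ is restricted to $I_2=(2a,\tfrac\pi2+\tfrac{3a}4)$, the two functions appearing inside each integral in (\ref{3.4})--(\ref{3.5}) are already at our disposal: the ``outer'' factor $q(t),p(t)$ should be the prescribed data $f_1,f_2$ of (\ref{3.10}), and the ``inner'' factor $q(t-\xi+\tfrac a2),p(t-\xi+\tfrac a2)$ should be $w_1,w_2$ by Lemma \ref{lem3}.

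First I would record the elementary inclusions that invoke $a\in[\tfrac\pi3,\tfrac{2\pi}5)$. Since $2a>\tfrac{3a}2$ always, and $\tfrac\pi2+\tfrac{3a}4<\pi-\tfrac a2$ is equivalent to $a<\tfrac{2\pi}5$, the interval $I_2$ is nonempty and contained in $(\tfrac{3a}2,\pi-\tfrac a2)$; hence (\ref{3.4})--(\ref{3.5}) apply verbatim for every $\xi\in I_2$. Fix such a $\xi$ and let $t$ range over the integration interval $[\xi+\tfrac a2,\pi]$. On one side, $t\ge\xi+\tfrac a2>2a+\tfrac a2=\tfrac{5a}2\ge\tfrac\pi2+\tfrac{3a}4$, the last inequality being $\tfrac{7a}4\ge\tfrac\pi2$, i.e. $a\ge\tfrac{2\pi}7$, which follows from $a\ge\tfrac\pi3$; thus $t\in[\tfrac\pi2+\tfrac{3a}4,\pi]$, so $q(t)=f_1(t)$ and $p(t)=f_2(t)$ by (\ref{3.10}). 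On the other side, $a\le t-\xi+\tfrac a2\le\pi-\xi+\tfrac a2<\pi-2a+\tfrac a2=\pi-\tfrac{3a}2\le\tfrac{3a}2$, the last inequality being $\pi\le 3a$, i.e. $a\ge\tfrac\pi3$; thus $t-\xi+\tfrac a2\in[a,\tfrac{3a}2]$, so $q(t-\xi+\tfrac a2)=w_1(t-\xi+\tfrac a2)$ and $p(t-\xi+\tfrac a2)=w_2(t-\xi+\tfrac a2)$ by Lemma \ref{lem3}.

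Substituting these four replacements into (\ref{3.4}) turns its integral term into exactly the quantity $\delta_1(\xi)$ of (\ref{3.12}), so (\ref{3.4}) becomes $w_1(\xi)=q(\xi)-\delta_1(\xi)$; likewise (\ref{3.5}) becomes $w_2(\xi)=p(\xi)-\delta_2(\xi)$. Solving these two scalar identities for $q(\xi)$ and $p(\xi)$ gives (\ref{3.11})--(\ref{3.12}) for every $\xi\in I_2$, as asserted. There is nothing deep in this; the only delicate point is the interval arithmetic, and it is worth observing that it is tight at both ends: the inclusion $t-\xi+\tfrac a2\in[a,\tfrac{3a}2]$ that lets us invoke Lemma \ref{lem3} requires $a\ge\tfrac\pi3$, while the nonemptiness of $I_2$ and the inclusion $I_2\subseteq(\tfrac{3a}2,\pi-\tfrac a2)$ require $a<\tfrac{2\pi}5$; together these are precisely where the standing hypothesis $a\in[\tfrac\pi3,\tfrac{2\pi}5)$ is used.
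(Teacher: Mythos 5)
Your proof is correct and follows exactly the paper's route: restrict the identities (\ref{3.4})--(\ref{3.5}) to $\xi\in I_{2}$, verify that $t\in[\frac{\pi}{2}+\frac{3a}{4},\pi]$ and $t-\xi+\frac{a}{2}\in[a,\frac{3a}{2}]$ on the integration range, and replace $q,p$ there by $f_{1},f_{2}$ via (\ref{3.10}) and by $w_{1},w_{2}$ via Lemma \ref{lem3}. Your explicit verification of the interval arithmetic (and of where $a\ge\frac{\pi}{3}$ and $a<\frac{2\pi}{5}$ are used) is merely a more detailed writing of what the paper states as ``it is easy to find.''
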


Using Lemmas \ref{lem2}, \ref{lem5} and \ref{lem6}, we immediately obtain the following lemma.

\begin{lemma}\label{lem7}
If $\lambda_{n,1}=\widetilde{\lambda}_{n,1}$, $\lambda_{n,2}=\widetilde{\lambda}_{n,2}$, $n\in\mathbb{Z}$, then $q|_{I_{2}}=\widetilde{q}|_{I_{2}}$ and $p|_{I_{2}}=\widetilde{p}|_{I_{2}}$. Thus, the specification of two spectra $\{\lambda_{n,1}\}_{n\in\mathbb{Z}}$ and $\{\lambda_{n,2}\}_{n\in\mathbb{Z}}$ uniquely determines the potentials $q$ and $p$ on the interval $I_{2}$.
\end{lemma}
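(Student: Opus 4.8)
\textbf{Proof proposal for Lemma \ref{lem7}.}

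The plan is to mirror the argument used for Lemma \ref{lem5}, now feeding in the additional a priori data on the subinterval $(\frac{3a}{2},\frac{\pi}{2}+\frac{a}{4})$. First I would invoke Lemma \ref{lem2}: from $\lambda_{n,1}=\widetilde{\lambda}_{n,1}$ and $\lambda_{n,2}=\widetilde{\lambda}_{n,2}$ for all $n\in\mathbb{Z}$ we get $u_{1}(x)=\widetilde{u}_{1}(x)$ and $u_{2}(x)=\widetilde{u}_{2}(x)$ a.e.\ on $[a-\pi,\pi-a]$, hence by the defining relations (\ref{3.1})--(\ref{3.2}) also $w_{1}(x)=\widetilde{w}_{1}(x)$ and $w_{2}(x)=\widetilde{w}_{2}(x)$ a.e.\ on $[a,\pi]$. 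Next, Lemma \ref{lem5} already gives $q|_{I_{1}}=\widetilde{q}|_{I_{1}}$ and $p|_{I_{1}}=\widetilde{p}|_{I_{1}}$; in particular, since $[\frac{\pi}{2}+\frac{3a}{4},\pi]\subset I_{1}$, the boundary data $f_{1}=q|_{[\frac{\pi}{2}+\frac{3a}{4},\pi]}$ and $f_{2}=p|_{[\frac{\pi}{2}+\frac{3a}{4},\pi]}$ satisfy $f_{1}=\widetilde{f}_{1}$ and $f_{2}=\widetilde{f}_{2}$.

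Now I would apply Lemma \ref{lem6}. The correction terms $\delta_{1}(x)$ and $\delta_{2}(x)$ defined in (\ref{3.12}) are built entirely out of $f_{1},f_{2}$ and $w_{1},w_{2}$; since all four of these functions agree with their tilde counterparts (the first two on $[\frac{\pi}{2}+\frac{3a}{4},\pi]$, the latter two on $[a,\pi]$ — and the integrals in (\ref{3.12}) only sample $f_{1},f_{2}$ at arguments $t\in[\frac{\pi}{2}+\frac{3a}{4},\pi]$ and $w_{1},w_{2}$ at arguments in $[a,\frac{3a}{2}]$ by the range estimate $\pi-\frac{a}{2}\le \xi+\frac{a}{2}\le t\le\pi$, $a\le t-\xi+\frac{a}{2}\le\frac{3a}{2}$ valid for $\xi\in I_{2}$), we conclude $\delta_{1}(x)=\widetilde{\delta}_{1}(x)$ and $\delta_{2}(x)=\widetilde{\delta}_{2}(x)$ for $x\in I_{2}$. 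Combining this with $w_{1}=\widetilde{w}_{1}$, $w_{2}=\widetilde{w}_{2}$ on $[a,\pi]\supset I_{2}$ and the representation (\ref{3.11}), we obtain $q|_{I_{2}}=(w_{1}+\delta_{1})|_{I_{2}}=(\widetilde{w}_{1}+\widetilde{\delta}_{1})|_{I_{2}}=\widetilde{q}|_{I_{2}}$ and likewise $p|_{I_{2}}=\widetilde{p}|_{I_{2}}$, which is the claim.

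The argument is essentially a bookkeeping chain, so there is no deep obstacle; the one point that requires genuine care is verifying that the arguments appearing inside the integrals in (\ref{3.12}) stay within the sets where the relevant equalities have already been established — i.e.\ that for $\xi\in I_{2}=(2a,\frac{\pi}{2}+\frac{3a}{4})$ one indeed has $t\in[\frac{\pi}{2}+\frac{3a}{4},\pi]$ and $t-\xi+\frac{a}{2}\in[a,\frac{3a}{2}]$, which is exactly the range computation recorded just before Lemma \ref{lem6} and uses $a\in[\frac{\pi}{3},\frac{2\pi}{5})$ so that the subintervals $[a,\frac{3a}{2}]$, $[\pi-a,2a]$, $[\frac{\pi}{2}+\frac{3a}{4},\pi]$ are pairwise disjoint and partition, together with $I_{2}$, the relevant portion of $[a,\pi]$. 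Once that nesting is confirmed, the uniqueness on $I_{2}$ follows immediately from the already-proved uniqueness of $w_{1},w_{2}$ and of $q,p|_{I_{1}}$.
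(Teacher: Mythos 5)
Your proposal is correct and follows essentially the same route as the paper: obtain $w_{1}=\widetilde{w}_{1}$, $w_{2}=\widetilde{w}_{2}$ on $[a,\pi]$ from Lemma \ref{lem2} via (\ref{3.1})--(\ref{3.2}), obtain $f_{1}=\widetilde{f}_{1}$, $f_{2}=\widetilde{f}_{2}$ from Lemma \ref{lem5} since $[\frac{\pi}{2}+\frac{3a}{4},\pi]\subset I_{1}$, and then conclude via the representation (\ref{3.11})--(\ref{3.12}) of Lemma \ref{lem6}. The only cosmetic difference is that you quote the lower bound $\xi+\frac{a}{2}\ge\pi-\frac{a}{2}$ where the paper records $\xi+\frac{a}{2}\ge\frac{\pi}{2}+\frac{3a}{4}$; both are valid for $\xi\in I_{2}$ when $a\in[\frac{\pi}{3},\frac{2\pi}{5})$ and both suffice to keep the integrand arguments in the already-determined ranges.
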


\begin{proof}
Since $\lambda_{n,1}=\widetilde{\lambda}_{n,1}$, $\lambda_{n,2}=\widetilde{\lambda}_{n,2}$, $n\in\mathbb{Z}$, according to Lemmas \ref{lem2} and \ref{lem5}, and taking the equations (\ref{3.1}), (\ref{3.2}) and (\ref{3.10}) into account, we have
\begin{align}\label{3.13}
  w_{1}(x)=\widetilde{w}_{1}(x),\quad w_{2}(x)=\widetilde{w}_{2}(x),\quad a.e. \;on \;[a,\pi],
\end{align}
and
\begin{align}\label{3.14}
  f_{1}(x)=\widetilde{f}_{1}(x),\quad f_{2}(x)=\widetilde{f}_{2}(x),\quad a.e. \;on \;[\frac{\pi}{2}\!+\!\frac{3a}{4},\pi].\!\!\!\!\!\!
\end{align}
Using Lemma \ref{lem6}, it is from (\ref{3.13}) and (\ref{3.14}) that
\begin{align}
  q(x)=\widetilde{q}(x),\quad p(x)=\widetilde{p}(x),\quad a.e. \;on \;I_{2}, \nonumber
\end{align}
i.e. $q|_{I_{2}}=\widetilde{q}|_{I_{2}}$ and $p|_{I_{2}}=\widetilde{p}|_{I_{2}}$.
\end{proof}

For simplicity, we denote
\begin{align}\label{3.15}
g_{1}=q|_{[a,\frac{\pi}{2}+\frac{a}{4})},\qquad g_{2}=p|_{[a,\frac{\pi}{2}+\frac{a}{4})},
\end{align}
When $\xi\!\in\!I_{3}:=[\frac{\pi}{2}\!+\!\frac{a}{4},\pi\!-\!a)$, it is easy to find that
\begin{align}
  \frac{\pi}{2}+\frac{3a}{4}\leq\xi+\frac{a}{2}\leq t\leq\pi,\quad  a\leq t-\xi+\frac{a}{2}\leq\frac{\pi}{2}+\frac{a}{4}.   \nonumber
\end{align}
Hence, according to the equations (\ref{3.10}) and (\ref{3.15}), the relations (\ref{3.4}) and (\ref{3.5}) yield

\begin{align}
  q(\xi)=w_{1}(\xi)\!+\!\int_{\xi+\frac{a}{2}}^{\pi}\left[f_{1}(t)g_{2}\!\left(t\!-\!\xi\!+\!\frac{a}{2}\right)\!-\!f_{2}(t)g_{1}\!\left(t\!-\!\xi\!
  +\!\frac{a}{2}\right)\right]dt, \nonumber
\end{align}
\begin{align}
  p(\xi)=w_{2}(\xi)\!+\!\int_{\xi+\frac{a}{2}}^{\pi}\left[f_{1}(t)g_{1}\!\left(t\!-\!\xi\!+\!\frac{a}{2}\right)\!+\!f_{2}(t)g_{2}\!\left(t\!-\!\xi\!
  +\!\frac{a}{2}\right)\right]dt. \nonumber
\end{align}

Thus, we have proved the following lemma.

\begin{lemma}\label{lem8}
The following relations hold:
\begin{align}\label{3.16}
  q|_{I_{3}}=(w_{1}+\eta_{1})|_{I_{3}},\quad
  p|_{I_{3}}=(w_{2}+\eta_{2})|_{I_{3}},
\end{align}
where
\begin{align}\label{3.17}
\begin{cases}
  \eta_{1}(x)=\int_{x+\frac{a}{2}}^{\pi}\left[f_{1}(t)g_{2}\!\left(t\!-\!x\!+\!\frac{a}{2}\right)\!-\!f_{2}(t)g_{1}\!\left(t\!-\!x\!
  +\!\frac{a}{2}\right)\right]dt,\!\!\\
  \eta_{2}(x)=\int_{x+\frac{a}{2}}^{\pi}\left[f_{1}(t)g_{1}\!\left(t\!-\!x\!+\!\frac{a}{2}\right)\!+\!f_{2}(t)g_{2}\!\left(t\!-\!x\!
  +\!\frac{a}{2}\right)\right]dt,
\end{cases}\!\!\!\!\!
\end{align}
for $x\in I_{3}$.
\end{lemma}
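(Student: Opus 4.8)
The plan is to read the claimed representation straight off the pointwise identities $(\ref{3.4})$ and $(\ref{3.5})$, which already express $q(\xi)$ and $p(\xi)$ in terms of $w_{1}(\xi)$, $w_{2}(\xi)$ and a quadratic Volterra-type integral in the potentials, valid for every $\xi\in(\frac{3a}{2},\pi-\frac{a}{2})$. Thus essentially the whole content of the lemma is a relabelling: one must check that, for $\xi$ in the smaller interval $I_{3}=[\frac{\pi}{2}+\frac{a}{4},\pi-a)$, every value of $q$ and $p$ occurring inside those integrals has already been named, either as $f_{1},f_{2}$ via $(\ref{3.10})$ or as $g_{1},g_{2}$ via $(\ref{3.15})$.

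First I would record the elementary inclusions forced by $a\in[\frac{\pi}{3},\frac{2\pi}{5})$: namely $I_{3}\subset(\frac{3a}{2},\pi-\frac{a}{2})$, which follows from $\frac{\pi}{2}+\frac{a}{4}>\frac{3a}{2}$ (equivalently $a<\frac{2\pi}{5}$) together with $\pi-a<\pi-\frac{a}{2}$; in particular $(\ref{3.4})$ and $(\ref{3.5})$ are applicable throughout $I_{3}$, and $I_{3}$ is nonempty since $a<\frac{2\pi}{5}$. Then, fixing $\xi\in I_{3}$, in the integrals in $(\ref{3.4})$--$(\ref{3.5})$ the outer variable $t$ runs over $[\xi+\frac{a}{2},\pi]$, and $\xi\ge\frac{\pi}{2}+\frac{a}{4}$ gives $\xi+\frac{a}{2}\ge\frac{\pi}{2}+\frac{3a}{4}$, so $t\in[\frac{\pi}{2}+\frac{3a}{4},\pi]$ and hence $q(t)=f_{1}(t)$, $p(t)=f_{2}(t)$ by $(\ref{3.10})$; at the same time the shifted argument obeys $a\le t-\xi+\frac{a}{2}\le\pi-\xi+\frac{a}{2}\le\frac{\pi}{2}+\frac{a}{4}$, again using $\xi\ge\frac{\pi}{2}+\frac{a}{4}$, so for a.e.\ $\xi$ one has $q(t-\xi+\frac{a}{2})=g_{1}(t-\xi+\frac{a}{2})$ and $p(t-\xi+\frac{a}{2})=g_{2}(t-\xi+\frac{a}{2})$ by $(\ref{3.15})$ (the endpoint $\frac{\pi}{2}+\frac{a}{4}$ is attained only at $t=\pi$, $\xi=\frac{\pi}{2}+\frac{a}{4}$ and is irrelevant).

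Substituting these identifications into $(\ref{3.4})$ and $(\ref{3.5})$ converts the quadratic integrals into exactly $\eta_{1}(\xi)$ and $\eta_{2}(\xi)$ of $(\ref{3.17})$, giving $q(\xi)=w_{1}(\xi)+\eta_{1}(\xi)$ and $p(\xi)=w_{2}(\xi)+\eta_{2}(\xi)$ for a.e.\ $\xi\in I_{3}$, which is $(\ref{3.16})$. The computation is routine; the only point requiring care is the interval bookkeeping, specifically verifying that for $\xi\in I_{3}$ the block $[\frac{\pi}{2}+\frac{3a}{4},\pi]$ holding the $t$-values and the block $[a,\frac{\pi}{2}+\frac{a}{4})$ holding the shifted values together exhaust all arguments of $q,p$ that appear in $(\ref{3.4})$--$(\ref{3.5})$. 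I do not expect any genuine obstacle beyond checking these inclusions within the admissible range of $a$.
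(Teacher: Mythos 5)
Your proposal is correct and follows essentially the same route as the paper: verify that for $\xi\in I_{3}$ the outer variable satisfies $t\in[\xi+\frac{a}{2},\pi]\subset[\frac{\pi}{2}+\frac{3a}{4},\pi]$ and the shifted argument satisfies $t-\xi+\frac{a}{2}\in[a,\frac{\pi}{2}+\frac{a}{4}]$, then substitute $f_{1},f_{2},g_{1},g_{2}$ into (\ref{3.4})--(\ref{3.5}). Your additional checks (that $I_{3}\subset(\frac{3a}{2},\pi-\frac{a}{2})$ is nonempty when $a<\frac{2\pi}{5}$, and that the endpoint $\frac{\pi}{2}+\frac{a}{4}$ is attained only on a null set) are sound refinements of what the paper leaves implicit.
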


Now we are in position to give the proof of Theorem \ref{th2}.

\textbf{Proof of Theorem \ref{th2}.} According to Lemmas \ref{lem5} and \ref{lem7}, the conditions $\lambda_{n,1}=\widetilde{\lambda}_{n,1}$, $\lambda_{n,2}=\widetilde{\lambda}_{n,2}$, $n\in\mathbb{Z}$, imply that
\begin{align}\label{3.18.0}
  w_{1}(x)=\widetilde{w}_{1}(x),\quad w_{2}(x)=\widetilde{w}_{2}(x),\quad a.e. \;on \;[a,\pi],
\end{align}
and
\begin{align}\label{3.18}
  q(x)=\widetilde{q}(x),\quad p(x)=\widetilde{p}(x),\quad a.e. \;on \;I_{1}\cup I_{2}.\quad\;\;
\end{align}
Since $q(x)\!=\!\widetilde{q}(x)$, $p(x)\!=\!\widetilde{p}(x)$ a.e. on $(\frac{3a}{2},\frac{\pi}{2}\!+\!\frac{a}{4})$, it follows from (\ref{3.15})
and (\ref{3.18}) that
\begin{align}\label{3.19}
  g_{1}(x)=\widetilde{g}_{1}(x),\quad g_{2}(x)=\widetilde{g}_{2}(x),\quad a.e. \;on \;[a,\frac{\pi}{2}\!+\!\frac{a}{4}).
\end{align}
Additionally, the relations (\ref{3.10})and (\ref{3.18}) yield
\begin{align}\label{3.20}
  f_{1}(x)=\widetilde{f}_{1}(x),\quad f_{2}(x)=\widetilde{f}_{2}(x),\quad a.e. \;on \;[\frac{\pi}{2}\!+\!\frac{3a}{4},\pi].\!\!
\end{align}
In view of (\ref{3.17}) and (\ref{3.19})-(\ref{3.20}), we have
\begin{align}\label{3.21}
  \eta_{1}(x)=\widetilde{\eta}_{1}(x),\quad \eta_{2}(x)=\widetilde{\eta}_{2}(x),\quad a.e. \;on \;I_{3}.\qquad\quad
\end{align}
According Lemmas \ref{lem8} along with the relations (\ref{3.18.0}) and (\ref{3.21}), we obtain
\begin{align}\label{3.22}
  q(x)=\widetilde{q}(x),\quad p(x)=\widetilde{p}(x),\quad a.e. \;on \;I_{3}.
\end{align}
Note that $[a,\pi]=I_{1}\cup I_{2}\cup I_{3}\cup(\frac{3a}{2},\frac{\pi}{2}\!+\!\frac{a}{4})$. The relations (\ref{3.18}) and (\ref{3.22}) arrive at the assertion of Theorem \ref{th2}. \qquad\qquad\qquad\qquad\qquad\qquad\qquad\, $\Box$

Based on the above discussion process, we have the following algorithm for solving Inverse Problem 1.

\textbf{Algorithm 1.}  Let the two spectra $\{\lambda_{n,j}\}_{n\in\mathbb{Z}}$, $j=1,2$ and  partial potentials $q|_{(\frac{3a}{2},\frac{\pi}{2}+\frac{a}{4})}$, $p|_{(\frac{3a}{2},\frac{\pi}{2}+\frac{a}{4})}$ be given.

(i) Construct the functions $\Delta_{1}(\lambda)$ and $\Delta_{2}(\lambda)$ by (\ref{2.15}) and (\ref{2.16});

(ii) In accordance with (\ref{2.11}) and (\ref{2.12}), find the functions $u_{1}(x)$ and $u_{2}(x)$  by the formulae
\begin{align}
 u_{1}(x)=\frac{1}{2\pi}\sum_{n=-\infty}^{\infty}\Delta_{1}(n)\exp(-inx), \qquad\qquad\nonumber\\ u_{2}(x)=\frac{1}{2\pi}\sum_{n=-\infty}^{\infty}\Big(\Delta_{2}(n)-(-1)^{n}\Big)\exp(-inx);\!\!\!\!\!\!\nonumber
\end{align}

(iii) Construct the functions $w_{1}(x)$ and $w_{2}(x)$ by the formulae (\ref{3.1}) and (\ref{3.2});

(iv) Construct the functions $q|_{[a,\frac{3a}{2}]\cup[\pi-\frac{a}{2},\pi]}$ and $p|_{[a,\frac{3a}{2}]\cup[\pi-\frac{a}{2},\pi]}$ by the formula (\ref{3.3});

(v) Construct the functions $q|_{[\pi-a,2a]\cup[\frac{\pi}{2}+\frac{3a}{4},\pi-\frac{a}{2})}$ and $p|_{[\pi-a,2a]\cup[\frac{\pi}{2}+\frac{3a}{4},\pi-\frac{a}{2})}$ by the formulae (\ref{3.6}) and (\ref{3.7});

(vi) Construct the functions $q|_{(2a,\frac{\pi}{2}+\frac{3a}{4})}$ and $p|_{(2a,\frac{\pi}{2}+\frac{3a}{4})}$ by the formulae (\ref{3.10}),  (\ref{3.11})and (\ref{3.12});

(vii) Construct the functions $q|_{[\frac{\pi}{2}+\frac{a}{4},\pi-a)}$ and $p|_{[\frac{\pi}{2}+\frac{a}{4},\pi-a)}$ by the formulae (\ref{3.10}), (\ref{3.15}), (\ref{3.16}) and (\ref{3.17}).

\qquad
	
	%%%%%%%%%%%%%%%%%%%%%%%%%%%%%%%%%%%%%
	\noindent {\bf Acknowledgments.}
	This work was supported in part by  the National Natural Science Foundation of China (11871031) and the National Natural Science Foundation of Jiang Su (BK20201303).

\end{document}